\documentclass[12pt]{article}

\usepackage{xparse}                                             %%% Define some macros

\usepackage{amsmath}
\usepackage{amsthm}
\usepackage{thmtools}
\usepackage[style=alphabetic,giveninits=true,maxalphanames=99,maxbibnames=99,url=false]{biblatex}
\DeclareDatamodelConstant[type=list]{nameparts}{family,given,given-i}
\bibliography{ngs.bib}

\numberwithin{equation}{section}
\numberwithin{figure}{section}
\numberwithin{table}{section}

\usepackage{amsfonts}
\usepackage{amssymb}
\usepackage{mathtools}
\makeatletter
\DeclareRobustCommand*\cal{\@fontswitch\relax\mathcal}
\makeatother

\usepackage{graphicx}

\usepackage[colorlinks]{hyperref}
\usepackage[capitalize]{cleveref}
\crefname{section}{Section}{Sections}
\Crefname{section}{Section}{Sections}
\Crefname{construction}{Construction}{Constructions}
\Crefname{construction}{Construction}{Constructions}
\usepackage{url}
\newcommand{\doi}[1]{\textsc{doi}: \href{http://dx.doi.org/#1}{\nolinkurl{#1}}}

\usepackage{enumitem}
\setlist[enumerate, 1]{label = (\roman*), font = \upshape}
\setlist[itemize, 2]{label = {$\circ$}}
\theoremstyle{plain}
\newtheorem{theorem}{Theorem}[section]
\newtheorem{corollary}[theorem]{Corollary}
\newtheorem{lemma}[theorem]{Lemma}

\newtheorem{proposition}[theorem]{Proposition}

\theoremstyle{definition}

\newtheorem{definition}[theorem]{Definition}
\newtheorem{example}[theorem]{Example}

\theoremstyle{remark}

\DeclareMathOperator{\Span}{\mathrm{span}}
\DeclareMathOperator{\Spec}{\mathrm{Spec}}
\DeclareMathOperator{\SSpec}{\mathrm{SSpec}}
\DeclareMathOperator{\tr}{\mathrm{tr}}
\DeclareMathOperator{\trSpec}{\mathrm{trSpec}}

\DeclareMathOperator{\NN}{\mathbb{N}}
\DeclareMathOperator{\ZZ}{\mathbb{Z}}
\DeclareMathOperator{\FF}{\mathbb{F}}

\DeclareMathOperator{\from}{\colon}
\renewcommand{\mid}{:\ }

\DeclareMathOperator{\lt}{<}

\title{Natural graph spectra}

\author{
Ziqing Xiang\\
Southern University of Science and Technology\\
\texttt{xiangzq@sustech.edu.cn}
}

\date{}

\begin{document}

\maketitle

\begin{abstract}
In 2003, van Dam and Haemers posed a fundamental question in spectral graph theory: does there exist a ``sensible'' matrix whose spectrum determines a random graph up to isomorphism? This paper introduces the class of {\em natural graph matrices}, which are matrices defined by applying a fixed sequence of elementary operations to the adjacency matrix. This class includes many standard matrices such as the adjacency matrix, the Seidel matrix, the Laplacian matrix, and the distance matrix. We give an affirmative answer to the question of van Dam and Haemers by proving the existence of a natural graph matrix whose spectrum determines random graphs up to isomorphism. The proof introduces a new algebraic framework called {\em double algebras}, which provides a simple sufficient condition for spectral determination. This sufficient condition is then shown to hold for random graphs.
\end{abstract}

\section{Introduction} \label{sec:orinasal}

The extent to which a graph is determined by the spectrum of an associated matrix is a foundational problem in spectral graph theory. This problem dates back to the 1950s, initially studied by chemists in the context of the adjacency spectrum.

Two comprehensive surveys by van Dam and Haemers in 2003 and 2009 \cite{DamHaemers2003,DamHaemers2009} summarize the development of this problem. In \cite{DamHaemers2003}, they conjectured that almost all graphs are determined by their adjacency spectra. Moreover, they posed the question: for which ``sensible'' matrices does the spectrum determine the graph up to isomorphism? We refer to this as the Determined by Spectrum (DS) problem.

The DS problem is naturally connected to the Graph Isomorphism (GI) problem because if a graph is determined by its spectrum, then one could test isomorphism by comparing spectra (provided the spectra can be computed efficiently). However, since GI is believed to be computationally hard, one cannot expect a simple spectrum to determine every graph. On the other hand, it is well-known that GI is easy for random graphs. Indeed, Babai, Erd\H{o}s and Selkow \cite{BabaiErdoesSelkow1980} gave a linear-time algorithm for testing isomorphism of random graphs. This suggests that the DS problem might have a positive answer for random graphs with respect to some matrix spectra.

There is a large body of work on various types of spectra, both on the DS problem specifically and in spectral graph theory more generally. Commonly studied spectra include those of the adjacency matrix \cite{KovalKwan2024}, the adjacency matrix of the complement \cite{WangXu2006,Wang2017}, Laplacian matrix, signless Laplacian matrix \cite{CvetkovicRowlinsonSimic2007}, and distance matrix \cite{AouchicheHansen2014}, among others. These matrices are all considered to be ``sensible''. However, it remains open whether any of these spectra can determine the structure of random graphs. The main difficulty is the lack of good sufficient conditions for a graph to be determined by a given spectrum, which in turn obstructs any probabilistic estimation.

In this paper, we introduce a new class of spectral invariants, the {\em natural graph spectra}. The idea and justification for this concept are discussed first in \cref{sec:prepubescent}, and a rigorous definition is given in \cref{def:trout}.

The main result, \cref{thm:syndicalism}, is an affirmative answer to the DS problem. It shows the existence of a natural graph spectrum that determines the structure for a random graph.

\begin{theorem} \label{thm:syndicalism}
Let $n$ be a natural number. There exists a natural graph spectrum (which may depend on $n$) such that for the Erd\H{o}s-R\'enyi random graph $G(n, \frac{1}{2})$, the spectrum determines $G$ up to isomorphism asymptotically almost surely.
\end{theorem}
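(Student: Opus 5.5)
The plan is to reduce spectral determination to an algebraic statement about a matrix algebra attached to $G$, and then verify that statement for $G(n,\frac12)$ using the Babai--Erd\H{o}s--Selkow canonical labelling. Natural graph matrices are built from the adjacency matrix $A$ by a fixed sequence of elementary operations. I expect these to include linear combinations with $I$ and $J$, matrix products, entrywise (Hadamard) products, and applying polynomials. Each entry of such a matrix is then an isomorphism-invariant, combinatorially meaningful function of $G$.

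\textbf{Step 1: the double algebra.} Let $\mathcal{D}(G)$ be the smallest set of matrices containing $I$, $J$ and $A$ that is closed under linear combinations, ordinary products and Hadamard products (a coherent-algebra-like object). The sufficient condition I aim for has two parts.
\begin{enumerate}
\item $\mathcal{D}(G)$ is the full matrix algebra, or at least contains the diagonal matrices $E_{vv}$. Equivalently, 2-dimensional Weisfeiler--Leman refinement individualises every vertex.
\item A single natural matrix $M$ encodes $G$ through its spectrum. Concretely, take $M = \sum_k t^k M_k$, where the $M_k$ run through a fixed finite list of words in the operations and $t$ is a sufficiently generic scalar.
\end{enumerate}
The point is that every spectral moment $\tr(M^m)$ is a trace of a word in the double algebra. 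If the word list is long enough to produce the primitive idempotents $E_{vv}$ together with the entries $A_{uv}$ expressed relative to them, then the spectrum recovers a complete invariant. This invariant is the multiset of ``colour-refinement certificates'', which determines $G$ up to isomorphism whenever the refinement is discrete. Since $n$ is fixed, the length of the word list may depend on $n$, which the statement permits.

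\textbf{Step 2: the probabilistic input.} I would invoke Babai--Erd\H{o}s--Selkow. Asymptotically almost surely, the degree sequence together with one round of refinement assigns pairwise distinct labels to all vertices of $G(n,\frac12)$. These labels are expressible inside $\mathcal{D}(G)$: degrees are the entries of $A J$ restricted to the diagonal via $\circ I$, and the next round uses $A$ composed with polynomials in those diagonal matrices. Once the diagonal matrix $D$ of labels has distinct entries, interpolation polynomials in $D$ give every $E_{vv}$. Then $E_{uu} A E_{vv}$ records each adjacency, and the sufficient condition of Step 1 holds.

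\textbf{Main obstacle.} The hard part is encoding everything into one spectrum, because a spectrum loses the basis and so loses which $E_{vv}$ is which. I would handle this by making $M$ block-like and generic: $M = D + \epsilon A$, with $D$ the label matrix having distinct entries and $\epsilon$ a small formal or transcendental parameter. Its eigenvalues are perturbations of the labels, and the second-order corrections $\epsilon^2 \sum_u A_{vu}^2/(d_v-d_u)$ carry adjacency data. A cleaner route is to choose numeric weights so that the entries of $D$ are distinct powers of a large base, and to choose $M$ so that the coefficients of its characteristic polynomial read off $\sum_{u,v} A_{uv} x^{d_u+d_v}$-type sums. These sums decode uniquely to the labelled edge set, and since the labels are canonical this gives $G$ up to isomorphism. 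I expect the main technical work to be checking that this decoding is injective for graphs with discrete labels, so that any $H$ with the same spectrum also has discrete labels and therefore $H\cong G$.
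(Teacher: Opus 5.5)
The gap is precisely the step you call the main obstacle. You never exhibit a single natural matrix whose spectrum determines $G$, and none of the mechanisms you offer works as stated.

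In (ii), $M=\sum_k t^kM_k$ with one generic $t$ does not let you recover the individual spectra, because the $M_k$ do not commute. Already the coefficient of $t^4$ in $\tr M^2$ is $\sum_{j+k=4}\tr(M_jM_k)$, which mixes $\tr M_2^2$ with $\tr(M_1M_3)$. The perturbation $D+\epsilon A$ yields at second order one number $\sum_{u\sim v}1/(\ell_v-\ell_u)$ per vertex, and you give no argument that these numbers determine neighbourhoods. For the sums $\sum_{u,v}A_{uv}x^{d_u+d_v}$ you name no matrix whose characteristic polynomial produces them. You also leave open why a graph $H$ cospectral with $G$ must itself have discrete labels, which is exactly what ``determined by the spectrum'' requires.

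Your Step 2, by contrast, is sound. It is a genuine shortcut past the paper's universal $\circ$-idempotent basis (\cref{thm:misanthropic}), provided you interpolate over the fixed finite set $\Lambda$ of label values attainable by \emph{all} $n$-vertex graphs, not over the labels of the given $G$. Otherwise the words depend on $G$, which is the non-naturality the paper warns about at the end of \cref{sec:nonair}. Write $L$ for the double polynomial producing your label matrix. Then the words $e_\lambda:={\rm proj}^\circ_{\Lambda,\lambda}(L)\circ 1^\bullet$, $\lambda\in\Lambda$, are fixed, and on any graph satisfying the conclusion of \cref{thm:divulsive} their nonzero values are exactly the $E_{vv}$.

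The missing idea is that each vertex's identity must be attached to something the spectrum does not forget: either the index of a fixed word, or the eigenvalue itself. The paper does the former. If $b,b'$ evaluate to $E_{uu},E_{vv}$ (your $e_\lambda,e_\mu$ would do), the symmetrized word $b\bullet x\bullet b'+\sigma_x(b\bullet x\bullet b')$ evaluates to $A_{uv}(E_{uv}+E_{vu})$. Its spectrum records whether the two named vertices are adjacent (\cref{prop:undiscriminating}). The finitely many spectra of these bounded integer matrices are then merged into one using super-increasing integer weights (\cref{prop:indictional,prop:Rhus}): $\tr A_1^k$ is the remainder of $\tr(A_1+zA_2)^k$ modulo a large $z$, and $\tr A_2^k$ is the nearest integer to $z^{-k}\tr(A_1+zA_2)^k$.

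Alternatively, and closer to your ``certificates'' remark, no merging is needed. Let $L_2$ be the diagonal double polynomial whose $(v,v)$ entry encodes injectively (by an integer combination with suitably spaced coefficients) two things: the label $\ell(v)$, and, for each $\lambda\in\Lambda$, the number of neighbours of $v$ with label $\lambda$. The latter is the $(v,v)$ entry of $((x\bullet e_\lambda\bullet 1^\circ)\circ 1^\bullet)(A)$. The spectrum of $L_2(A)$ is the multiset of these certificates, computed by the same formula for every graph. Hence $\Spec L_2(A_H)=\Spec L_2(A_G)$ with $G$ discrete forces $H$ to have the same pairwise distinct labels and the same label-to-label adjacency, so $H\cong G$.
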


\cref{thm:syndicalism} follows from \cref{thm:albitic,thm:mammonolatry} in \cref{sec:buttress}, which may be of independent interest. \cref{thm:albitic} gives a simple sufficient condition for a graph to be determined by a certain natural graph spectrum. The sufficient condition involves a new algebraic structure called {\em double algebras}. \cref{thm:mammonolatry} shows that this double algebra sufficient condition is satisfied by random graphs.

\subsection{Natural graph matrices} \label{sec:prepubescent}

Our approach begins by rigorously defining what constitutes a ``sensible'' matrix. We define a {\em natural graph matrix} to be a matrix obtained from the adjacency matrix by a ``fixed sequence'' of operations: (1) linear combinations, (2) ordinary matrix multiplication $\bullet$, and (3) Hadamard (entry-wise) product $\circ$. The requirement of a ``fixed sequence'' ensures that the same sequence is applied to every graph, so the matrix is defined uniformly. For example, using adjacency matrices for some graphs and Laplacian matrices for some other graphs is not natural. A {\em natural graph spectrum} is the spectrum, over the algebraic closure of the base field, of a natural graph matrix.

The operations of linear combinations and matrix multiplication are natural to consider because if we know the spectrum of a matrix, then we know the spectrum of any polynomial in that matrix. The inclusion of Hadamard product is justified by several considerations.

\begin{enumerate}
\item Hadamard product is computationally easy, even easier than ordinary matrix multiplication.
\item The Hadamard product is defined independently of any specific graph. In other words, the process (not the result) of calculating a natural spectrum does not depend on the values of the adjacency matrices.
\item The theory of association schemes, or more generally the theory of coherent configurations, is built on the three operations. Association schemes and coherent configurations have numerous applications in design theory, coding theory, graph theory, and so on. In particular, coherent configurations are already used in the study of GI problem.
\item The known examples of ``sensible'' matrices are natural. For instance, the adjacency matrix $A_G$ is trivially natural. The adjacency matrix of the complement has the expression $\overline{A}_G = A_G^{\circ 0} - A_G^{\bullet 0} - A_G$, where $-^{\circ 0}$ and $-^{\bullet 0}$ mean taking the zeroth power with respect to $\circ$ and $\bullet$, respectively. In particular, $A_G^{\circ 0} = J$ is the all-one matrix, and $A_G^{\bullet 0} = I$ is the identity matrix. The Laplacian matrix and the signless Laplacian matrix have the expressions $L_G = (A_G \bullet A_G) \circ A_G^{\bullet 0} - A_G$ and $Q_G = (A_G \bullet A_G) \circ A_G^{\bullet 0} + A_G$, respectively. For graphs with a fixed number of vertices, the distance matrix is also natural. Since the formula for the distance matrix is more complicated, we postpone it to \cref{ex:Vesiculatae}.
\item Known examples of ``non-sensible'' matrices, such as those in \cite{DamHaemers2003,HalbeisenHungerbuehler2000}, are not natural. Proofs that these matrices are not natural are postponed to \cref{sec:cannabinaceous}.
\item Natural graph spectra cannot determine the structure of all graphs. Although this might seem to be a defect of natural graph spectra, this is exactly what we expect and it must be the case. Had they been able to determine the structure of all graphs, we could use them to solve the GI problem, which implies that the spectra would be extremely complicated, unless the GI problem could be solved much more efficiently than currently known \cite{Babai2015}. More on this is discussed in \cref{sec:cannabinaceous}.
\end{enumerate}

\subsection{Double algebras}

To give a solid foundation for the notion of a ``fixed sequence'' of operations and to analyze natural graph spectra, we introduce {\em double algebras}. Roughly speaking, a double algebra is a vector space equipped with two multiplications, $\bullet$ and $\circ$. 

For a graph $G$ with $n$ vertices, we associate the double algebra $\FF\langle\langle A_G \rangle\rangle$ generated by its adjacency matrix over a field $\FF$. This is a (double) subalgebra of the matrix (double) algebra $M_n(\FF)$. The theory of double algebras is developed in \cref{sec:calculatingly}. Using double algebras allows us to define natural graph matrices and natural graph spectra rigorously in \cref{def:trout}.

\subsection{Sufficient conditions} \label{sec:buttress}

We prove \cref{thm:syndicalism} by first establishing a simple sufficient condition for a certain natural graph spectrum to determine the graph. This condition resolves the key difficulty in previous studies of the DS problem. 

\begin{theorem} \label{thm:albitic}
Let $\FF$ be a field, and let $n$ be a natural number. There exists a natural graph spectrum $\Spec : \{\text{$n$-vertex graphs}\} \to \{\text{multisets}\}$, such that for every $n$-vertex graph $G$ satisfying $\FF\langle\langle A_G \rangle\rangle = M_n(\FF)$, the spectrum $\Spec G$ determines $G$ up to graph isomorphism.
\end{theorem}

\cref{thm:albitic} is proved in \cref{sec:Spinozism}. This theorem only establishes the existence of a natural graph spectrum. Although one could trace through the proof to construct an explicit natural graph spectrum, the resulting spectrum is quite complicated. The reason is that the proof uses the existence of a so-called universal involution-closed $\circ$-idempotent basis. It is possible to get around this general result and give a simpler explicit spectrum directly, but we do not pursue it in this paper.

\cref{thm:mammonolatry} shows that the sufficient condition $\FF\langle\langle A_G \rangle\rangle = M_n(\FF)$ in Theorem \ref{thm:albitic} is asymptotically almost surely satisfied for random graphs.

\begin{theorem} \label{thm:mammonolatry}
Let $\FF$ be a field, and let $n$ be a natural number. For random graphs $G$ with $n$ vertices, $\FF\langle\langle A_G \rangle\rangle = M_n(\FF)$ asymptotically almost surely.
\end{theorem}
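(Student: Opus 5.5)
Throughout, $G \sim G(n,\tfrac12)$ with $n\to\infty$, and $\mathcal{A} := \FF\langle\langle A_G \rangle\rangle$ is the smallest subspace of $M_n(\FF)$ containing $A_G$, $I = A_G^{\bullet 0}$ and $J = A_G^{\circ 0}$ that is closed under $\bullet$ and $\circ$. The plan is to prove that $\mathcal{A}$ contains every matrix unit $E_{uv}$. It is natural to work in two stages: first show that $\mathcal{A}$ contains the diagonal unit $E_{vv}$ for every vertex $v$, then generate the off-diagonal units. The probabilistic input is the classical fact, used in the Babai--Erd\H{o}s--Selkow argument, that almost surely the vertices of $G(n,\tfrac12)$ are separated by a canonical refinement.

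\textbf{Step 1: $\mathcal{A}$ is a coherent algebra containing the 1-dimensional Weisfeiler--Leman refinement.} Since $\mathcal{A}$ contains $I$ and $J$ and is closed under $\circ$, it has a basis of $\circ$-idempotent $0/1$ matrices, namely the characteristic matrices of a partition of $V\times V$. This holds over any field, because $\circ$-closed subspaces containing $J$ are spanned by indicator matrices of their level sets. One should double-check this carefully in positive characteristic; the safe route is to argue directly with the $0/1$ matrices we explicitly construct. The diagonal $\circ$-idempotents in $\mathcal{A}$ are matrices $D_S = \sum_{v\in S} E_{vv}$, and they form a Boolean algebra of vertex subsets $S$. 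If $D_S, D_T \in \mathcal{A}$, then $D_S A_G D_T \in \mathcal{A}$. Its row sums $(D_S A_G D_T J)\circ I$ give a diagonal matrix whose entries are the degrees into $T$ of the vertices of $S$. Taking $\circ$-level sets, that is, applying suitable polynomials under $\circ$ to a diagonal matrix with entries in the image of $\ZZ$ in $\FF$, we obtain $D_{S'}$ for each level set $S'$. Hence $\mathcal{A}$ contains the indicator of every cell of the stable colour refinement (1-WL) of $G$.

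There is one subtlety here: degrees are read in $\FF$, so in characteristic $p$ we only see degrees mod $p$. To get around this, I would use the refinement "by degree mod $p$" together with iterated neighbourhoods. Alternatively, one can observe that $A_G^{\bullet k}$ entries and Hadamard operations still separate vertices mod $p$ with high probability, by an argument analogous to the one below.

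\textbf{Step 2: random graphs are discrete under this refinement.} I would take the Babai--Erd\H{o}s--Selkow canonical labelling. Let $S$ be the set of the $r = \lfloor 3\log_2 n\rfloor$ highest-degree vertices. Almost surely these have distinct degrees, so each $\{v\}$ with $v\in S$ appears as a level set in Step 1. Almost surely, moreover, the adjacency vectors to $S$ are distinct for all vertices outside $S$. Then the cells $N(v)\cap T$ for $v\in S$ separate all vertices, so every $E_{vv}\in\mathcal{A}$. For positive characteristic, I would instead prove directly a mod-$p$ version. Let $S$ be the vertices whose degree mod $p$ lies in some rarely hit class, or refine by counts mod $p$ of common neighbours. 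The anticoncentration estimates (degrees are binomial; local limit theorem for $\mathrm{Bin}(n-1,\tfrac12)$ mod $p$ is near-uniform) still give separation with probability $1-o(1)$, possibly after a bounded number of additional rounds of refinement.

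\textbf{Step 3: off-diagonal units.} Once all $E_{uu}\in\mathcal{A}$, we have $E_{uu}JE_{vv} = E_{uv}\in\mathcal{A}$ for all $u,v$, so $\mathcal{A} = M_n(\FF)$. The main obstacle is Step 2 in positive characteristic, where degree information is only available mod $p$. There I would first try to replace degree counts by whether $v$ is adjacent to specific already-isolated vertices. This is a purely $0/1$ operation, $D_{\{w\}} A_G$ followed by $J\bullet$ and $\circ I$, which is characteristic-free. The remaining work is then only to isolate a small seed set of $O(\log n)$ vertices mod $p$, which a counting argument over degree residues together with common-neighbour counts should handle.
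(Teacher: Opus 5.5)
\paragraph{Characteristic $0$.} Here your argument is correct and is essentially the paper's proof.

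\begin{itemize}
\item Babai--Erd\H{o}s--Selkow gives $d_1>\dots>d_r>d_{r+1}$. Note that you need the last inequality as well, not only distinct degrees among the top $r$.
\item So the top $r$ vertices are singleton level sets of the diagonal degree matrix $(A\bullet A)\circ I$. The paper extracts them with ${\rm proj}^\circ_{\{0,\dots,n-1\},d_i}$.
\item The remaining vertices are separated by their adjacency vectors to this seed set.
\item Finally, $E_{uu}\bullet J\bullet E_{vv}=E_{uv}$ gives all matrix units.
\end{itemize}

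Your Boolean-algebra version of the middle step is cleaner than the paper's displayed $b_j$. You intersect the diagonal idempotents $(A\bullet E_{vv}\bullet J)\circ I$ and their complements inside $I-\sum_{v\in S}E_{vv}$. The paper's $b_j$, as printed, \emph{sums} the agreement indicators over $i\le r$. That yields agreement counts rather than a $0/1$ matrix; a $\circ$-product is what is needed there.

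\paragraph{Positive characteristic.} This is the genuine gap. The statement (``Let $\FF$ be a field'') covers this case, and you only sketch it. The concrete suggestions in your sketch would fail.

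\begin{itemize}
\item Degrees mod $p$ are nearly equidistributed. Each residue class contains about $n/p$ vertices, so there is no ``rarely hit'' class to seed with.
\item Once all entries live in the prime field $\mathbb{F}_p$, one round of refinement turns $c$ cells of $V\times V$ into at most $p^{c^2}$ cells. (A round means: multiply the current $\circ$-idempotent basis elements pairwise, then take level sets.) Hence after any bounded number of rounds, the number of cells is bounded independently of $n$. Such a partition cannot isolate $n$ vertices.
\end{itemize}

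So you would need a number of rounds that grows with $n$. You would also need a probabilistic argument controlling the dependence between the random cells and the later counts. That is the missing idea, and ``should handle'' does not supply it.

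In fairness, the paper's proof has the same limitation. The polynomial ${\rm proj}^\circ_{\{0,\dots,n-1\},d_i}$ requires $0,1,\dots,n-1$ to be distinct in $\FF$. So the paper tacitly assumes characteristic $0$, or at least characteristic $\ge n$. Your proof agrees with the paper wherever the paper's proof is valid, and you correctly spotted the case it silently omits. But you have not closed that case.
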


\cref{thm:mammonolatry} is proved in \cref{sec:nonair}. The main result, \cref{thm:syndicalism}, follows immediately from \cref{thm:albitic,thm:mammonolatry}.

We believe in general that the more ``complicated'' a natural graph spectrum is, the more graphs it can determine. In this sense, it is a very interesting question to find a natural graph spectrum which is closer to the classical spectra, such as adjacency spectrum and Laplacian spectrum, while still determining the structure of random graphs.

\subsection{Structure of the paper}

The rest of the paper is organized as follows. \cref{sec:calculatingly} develops the theory of double algebras. Its focus is the existence of a universal involution-closed $\circ$-idempotent basis. \cref{sec:Spinozism} applies this theory to natural graph spectra, and proves \cref{thm:albitic}. \cref{sec:nonair} analyzes the double algebra of random graphs, and prove \cref{thm:mammonolatry}. \cref{sec:cannabinaceous} discusses the dimensions of double algebras of graphs.

\section{Double algebras} \label{sec:calculatingly}

In this section, we develop the general theory of double algebras. The primary example is square matrices equipped with matrix multiplication, the Hadamard product, and the transpose. The main technical goal is \cref{sec:cactaceous}, which establishes the existence of a universal involution-closed $\circ$-idempotent basis. It is a key tool for our spectral constructions in \cref{sec:Spinozism}.

\subsection{Basics}

Let $\FF$ be a field. A {\em double algebra $R$ over $\FF$} is a vector space $V$ over $\FF$ equipped with two maps $\bullet_R \from V \otimes V \to V$ and $\circ_R \from V \otimes V \to V$. If we focus on only $\bullet_R$, or only on $\circ_R$, we obtain two ordinary algebras, denoted by $R^\bullet$ and $R^\circ$, respectively. The subscripts $_R$ will be omitted if it is clear from the context.

We call the double algebra an {\em associative (resp. commutative, semisimple) double algebra} if both $R^\bullet$ and $R^\circ$ are associative (resp. commutative, semisimple). We always assume the existence of an identity element in an associative algebra. For associative double algebras, there exist identity elements $1^\bullet$ and $1^\circ$ for $\bullet$ and $\circ$, respectively. All the double algebras are assumed to be associative in this paper. 

If we combine the two multiplications $\bullet$ and $\circ$, then we obtain a map $(\bullet, \circ) \from V \otimes V \to V \oplus V$. Note that the notion of double algebra we use differs from some algebraic structure with the same name in some literature, where the structure map is a map $V \otimes V \to V \otimes V$ instead.

Sometimes, double algebras are equipped with involutions. An {\em involution} of a double algebra $R$ is a linear map $\sigma : V \to V$ such that $\sigma(\sigma(a)) = a$, $\sigma(a \bullet b) = \sigma(b) \bullet \sigma(a)$ and $\sigma(a \circ b) = \sigma(b) \circ \sigma(a)$ for all $a, b \in R$.

The primary example, which is also the one used throughout the paper, is the following.

\begin{example} \label{ex:prefiguratively}
Let $M_n(\FF)$ be the set of $n \times n$ matrices over $\FF$. It is equipped with matrix multiplication $\bullet$ and entry-wise multiplication $\circ$. The $\bullet$-identity $1^\bullet$ is the identity matrix, and the $\circ$-identity is the all-one matrix. The matrix transpose $a \mapsto a^\top$ is an involution.
\end{example}

The two multiplications $\bullet$ and $\circ$ play an fundamental role in the theory of association schemes \cite{Delsarte1973}. The Bose-Mesner algebra of an association scheme, with $\bullet$ and $\circ$ equipped, is actually a double subalgebra of $M_n(\FF)$.

Another example comes from group representation theory.

\begin{example} \label{ex:termagantism}
Let $G$ be a finite group, and $\FF G$ be the group algebra over $\FF$. We equip it with a new multiplication
\[
  g \circ h := \begin{cases}
    g, & g = h, \\
    0, & g \neq h.
  \end{cases}
\]
The $\bullet$-identity $1^\bullet$ is $1$, and the $\circ$-identity $1^\circ$ is $\sum_{g \in G} g$. The inverse map on group elements $g \mapsto g^{-1}$ induces an involution.
\end{example}

This $\circ$ multiplication, while natural, is typically used implicitly in group representation theory and seldom appears as an explicit operation. With this double algebra viewpoint, it is possible to treat a portion of group representation theory, in particular, character table, as a special case of the representation theory of double algebras. We will discuss more on this in a separate paper.

\subsection{Free algebras}

To formalize ``fixed sequences'' of operations, we require an analogue of polynomial rings. This is provided by free double algebras.

The {\em free (associative) algebra} in $x$ over $\FF$ with multiplication $\bullet$ (resp. $\circ$) is denoted by $\FF\langle x \rangle^\bullet$ (resp. $\FF\langle x \rangle^\circ$). The elements are simply (not necessarily commutative) polynomials over $\FF$ with multiplication $\bullet$ (resp. $\circ$).

As with ordinary algebras, a {\em free double algebra} in $x$ over $\FF$ exists, and it is denoted by $\FF\langle\langle x \rangle\rangle$. The elements are like polynomials over $\FF$ with the caveat that we use two multiplications instead of one. We call the elements in $\FF\langle\langle x \rangle\rangle$ {\em double polynomials} in $x$ over $\FF$. For example, $(x \bullet x) \circ x + x$ is a double polynomial in $x$.

The free double algebra admits a standard involution. Let $\sigma_x : \FF\langle\langle x \rangle\rangle \to \FF\langle\langle x \rangle\rangle$ be given by $\sigma_x(x) = x$, $\sigma_x(p_1 \circ p_2) := \sigma_x(p_2) \circ \sigma_x(p_1)$, and $\sigma_x(p_1 \bullet p_2) := \sigma_x(p_2) \bullet \sigma_x(p_1)$ for $p_1, p_2 \in \FF\langle\langle x \rangle\rangle$.

Let $R$ be a double algebra, and $a \in R$ an element. The element $a$ generates a double subalgebra of $R$, that is, the smallest double subalgebra of $R$ containing $a$, denoted by $\FF\langle\langle a \rangle\rangle$.

Extending the notion of evaluation $p \mapsto p(a)$ for ordinary polynomial $p$ at $a$, we have a natural evaluation map for double polynomials
\begin{align*}
{\rm eval}_a \from \FF\langle\langle x \rangle\rangle & \to \FF\langle\langle a \rangle\rangle \subseteq R \\
x & \mapsto a.
\end{align*}
We often use the abbreviation $p(a) := {\rm eval}_a(p)$ for a double polynomial $p \in \FF\langle\langle x \rangle\rangle$. For example, for $p = (x \bullet x) \circ x + x \in \FF\langle\langle a \rangle\rangle$, $p(a) = (a \bullet a) \circ a + a \in R$.

\subsection{Idempotents}

Starting from this subsection, we will mainly discuss notions and results for $\circ$ for simplicity. The $\bullet$-version can be obtained by interchanging $\circ$ and $\bullet$.

In a double algebra, the idempotents with respect to $\circ$, namely the elements $x \in R$ such that $x \circ x = x$, are called {\em $\circ$-idempotents}. It is clear that $0$ and $1^\circ$ are $\circ$-idempotents.

Similar to the algebra case, we call the nonzero $\circ$-idempotents {\em primitive} if they cannot be written as the sum of two nonzero orthogonal $\circ$-idempotents. We call a basis $\{b_i \mid i \in I\}$ of $R$ a {\em $\circ$-idempotent basis} if the basis elements $b_i$ are all primitive $\circ$-idempotents.

We have the following basic facts about the existence of a $\circ$-idempotent basis. It follows directly from the Wedderburn-Artin theorem, and its proof is omitted.

\begin{lemma} \label{lem:countermaneuver}
Let $R$ be a finite dimensional double algebra. Assume that $R^\circ$ is commutative and split semisimple, then $R$ admits a $\circ$-idempotent basis.
\end{lemma}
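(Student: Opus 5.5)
The plan is to apply the Wedderburn--Artin theorem to the commutative associative algebra $R^\circ$ alone; the $\bullet$-structure plays no role. Since $R^\circ$ is finite dimensional, commutative and split semisimple, Wedderburn--Artin gives an algebra isomorphism $R^\circ \cong M_{n_1}(\FF) \times \cdots \times M_{n_k}(\FF)$. Each factor $M_{n_i}(\FF)$ is commutative, which forces $n_i = 1$, so $R^\circ \cong \FF^k$ with componentwise multiplication. Let $e_1, \dots, e_k$ be the preimages of the standard coordinate vectors. Then $e_i \circ e_j = \delta_{ij} e_i$, $\sum_i e_i = 1^\circ$, and $\{e_1,\dots,e_k\}$ is a basis of $V$, since $\dim_\FF V = k$.

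It remains to check that each $e_i$ is primitive in the sense defined above. Suppose $e_i = x + y$ with $x, y$ nonzero $\circ$-idempotents and $x \circ y = 0$. The $\circ$-idempotents of $\FF^k$ are exactly the $0/1$-vectors, because each coordinate satisfies $t^2 = t$ in the field $\FF$. So $x$ and $y$ are $0/1$-vectors with disjoint supports whose union is the support $\{i\}$ of $e_i$. This forces one of them to be zero, a contradiction. Hence $\{e_i\}$ is a $\circ$-idempotent basis.

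The only point needing care is what ``split'' means. I will take it to mean that every simple component has endomorphism division ring $\FF$, so that the factors are matrix algebras over $\FF$ itself rather than over extension fields. Without this hypothesis, one could get, for example, $R^\circ = \mathbb{C}$ over $\FF = \mathbb{R}$, and that algebra has no idempotent basis. With this reading, the commutativity step that collapses each factor to $\FF$ is immediate, and the rest of the argument is routine.
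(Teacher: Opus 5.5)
Your proof is correct and follows the route the paper intends. The paper omits the proof, saying only that the lemma ``follows directly from the Wedderburn--Artin theorem,'' and your argument fills in exactly that remark: commutativity forces every block $M_{n_i}(\FF)$ to be $1\times 1$, so $R^\circ \cong \FF^k$, and the coordinate idempotents form a basis of primitive $\circ$-idempotents.
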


Note that, for noncommutative $R^\circ$, we cannot expect the existence of a $\circ$-idempotent basis. It is possible to get some analogous result involving idempotents and nilpotent elements using Wedderburn principal theorem though. Since $\circ$-idempotent basis and its universal version in \cref{sec:hyperbranchia} is good enough for our purpose, we only consider the commutative $R^\circ$.

\subsection{Spectrum and projection}

For a matrix $a \in M_n(\FF)$, its {\em spectrum over $\FF$}, as a set, is the collection of all $\lambda \in \FF$ such that $\lambda I - a$ is not an invertible matrix. This concept of spectrum has been generalized to any associative algebra, and we use it for double algebras.

For $a \in R$, the {\em $\circ$-spectrum of $a$ over $\FF$} is
\[
  \Spec^\circ_{\FF} a := \{ \lambda \in \FF \mid \text{$a - \lambda$ is not invertible in $R^\circ$}\}.
\]
When $R$ is finite dimensional over $\FF$, the $\circ$-spectrum $\Spec^\circ_{\FF} a$ is a finite set for every $a \in R$.

Let $\Lambda \subseteq \FF$ be a finite subset, and for $\lambda \in \Lambda$, consider the {\em projection polynomial}
\begin{equation} \label{eq:spodomantic}
    {\rm proj}^\circ_{\Lambda, \lambda} := \sideset{}{^\circ} \prod_{\mu \in \Lambda \setminus \{\lambda\}} \frac{x - \mu 1^\circ}{\lambda - \mu} \in \FF\langle x \rangle^\circ,
\end{equation}
where $\sideset{}{^\circ} \prod$ denotes products with respect to $\circ$. The reason why it is called a projection polynomial is that it is related to the projection to the ``$\lambda$-eigenspace'' if we treat $\lambda \in \Spec^\circ_{\FF} a$ as an ``eigenvalue''. This will be illustrated in \cref{ex:slabbery} first, and then be proved in \cref{lem:tetrylene}.

\begin{example} \label{ex:slabbery}
Let $M_{n \times m}(\FF)$ be the collection of all $n \times m$ matrices over $\FF$ equipped with Hadamard product $\circ$. Let $a \in M_{n \times m}(\FF)$ be a matrix whose entries are in a finite subset $\Lambda \subseteq \FF$. Then, $\Spec^\circ_{\FF} a$ consists of all entries of $a$. Moreover, for every $\lambda \in \Lambda$, by a direct calculation,
\[
  \left({\rm proj}^\circ_{\Lambda, \lambda}(a)\right)_{ij} = \begin{cases}
    1, & a_{ij} = \lambda, \\
    0, & a_{ij} \neq \lambda.
  \end{cases}
\]
In other words, ${\rm proj}^\circ_{\Lambda, \lambda}(a)$ extracts the locations where the entries of $a$ are $\lambda$.
\end{example}

\cref{ex:slabbery} can be generalized in a setting of commutative split semisimple $R^\circ$, as to be shown in \cref{lem:tetrylene}.

\begin{lemma} \label{lem:tetrylene}
Let $R^\circ$ be a finite dimensional commutative split semisimple algebra over $\FF$. Let $\Lambda \subseteq \FF$ be a finite subset. Then, for every $a \in R^\circ$ such that $\Spec^\circ_{\FF} a \subseteq \Lambda$,
\[
  a = \sum_{\lambda \in \Lambda} \lambda \ {\rm proj}^\circ_{\Lambda, \lambda}(a),
\]
and the summand ${\rm proj}^\circ_{\Lambda, \lambda}(a)$ is a $\circ$-idempotent.
\end{lemma}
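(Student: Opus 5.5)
The plan is to reduce to the situation of \cref{ex:slabbery} through the Wedderburn--Artin structure of $R^\circ$. Since $R^\circ$ is finite dimensional, commutative and split semisimple, there is an algebra isomorphism $\phi \colon R^\circ \to \FF^m$ (coordinatewise multiplication) for some $m$. It sends $1^\circ$ to the all-one vector. This is the same fact underlying \cref{lem:countermaneuver}: the primitive $\circ$-idempotents $e_1,\dots,e_m$ form a basis with $e_i \circ e_j = \delta_{ij} e_i$ and $\sum_i e_i = 1^\circ$. Writing $a = \sum_i a_i e_i$ with $a_i \in \FF$, $\circ$-polynomials act coordinatewise: for any $p \in \FF\langle x\rangle^\circ$ we have $p(a) = \sum_i p(a_i) e_i$, where $p(a_i)$ denotes the ordinary polynomial evaluated at the scalar $a_i$. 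Because $R^\circ$ is commutative, the noncommutative free algebra in one variable is just ordinary polynomials in $x$, so no ordering issue arises.

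The first step is to identify the spectrum. The element $a - \lambda 1^\circ$ equals $\sum_i (a_i - \lambda) e_i$. It is invertible in $R^\circ$ if and only if every $a_i \neq \lambda$, in which case its inverse is $\sum_i (a_i-\lambda)^{-1} e_i$. Conversely, if some $a_i = \lambda$, then $e_i \circ (a - \lambda 1^\circ) = 0$ with $e_i \neq 0$, so the element is a zero divisor and not invertible. Hence $\Spec^\circ_\FF a = \{a_1,\dots,a_m\}$, and the hypothesis says each $a_i \in \Lambda$.

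The second step evaluates the projection polynomial coordinatewise. For each $i$, the scalar ${\rm proj}^\circ_{\Lambda,\lambda}(a_i) = \prod_{\mu \in \Lambda\setminus\{\lambda\}} (a_i-\mu)/(\lambda-\mu)$ is the Lagrange basis polynomial. Since $a_i \in \Lambda$, it equals $1$ if $a_i = \lambda$ and $0$ otherwise. Therefore ${\rm proj}^\circ_{\Lambda,\lambda}(a) = \sum_{i : a_i = \lambda} e_i$, which is a $\circ$-idempotent, possibly $0$. Summing gives
\[
\sum_{\lambda\in\Lambda} \lambda\, {\rm proj}^\circ_{\Lambda,\lambda}(a) = \sum_{\lambda \in \Lambda} \sum_{i : a_i=\lambda} \lambda e_i = \sum_i a_i e_i = a,
\]
because each $i$ is counted exactly once, namely for $\lambda = a_i$.

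The only point needing care is the setup: justifying $R^\circ \cong \FF^m$ from ``split semisimple commutative'' via Wedderburn--Artin, and checking that evaluating a $\circ$-polynomial commutes with this isomorphism, including the constant terms $\mu 1^\circ$. Both follow from $\phi$ being a unital algebra homomorphism. The empty-product case $\Lambda = \{\lambda\}$ gives $1^\circ$ and is consistent with the argument.
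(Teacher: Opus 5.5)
Your proposal is correct and follows the paper's proof. The paper likewise reduces, via $R^\circ \cong \FF^{\dim_{\FF} R^\circ}$, to coordinatewise evaluation as in \cref{ex:slabbery}, where ${\rm proj}^\circ_{\Lambda,\lambda}$ acts on each coordinate as the Lagrange indicator of $\lambda$. You additionally spell out details the paper leaves implicit: the identification $\Spec^\circ_{\FF} a = \{a_1,\dots,a_m\}$, and the fact that evaluating $\circ$-polynomials is compatible with the unital isomorphism.
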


\begin{proof}
Since $R^\circ$ is finite dimensional commutative split semisimple over $\FF$, $R^\circ \cong \FF^{\dim_{\FF} R^\circ}$. Thus, it suffices to prove the result for $\FF^n$. The projection ${\rm proj}^\circ_{\Lambda, \lambda}$ works by projecting a vector $a \in \FF^n$ to the locations where the entries are $\lambda$, just like in \cref{ex:slabbery}. The result follows directly from this interpretation (with $m = 1$ in \cref{ex:slabbery}).
\end{proof}

\subsection{Universal idempotent basis} \label{sec:hyperbranchia}

A subalgebra of a commutative split semisimple algbera is again commutative split semisimple. Hence by \cref{lem:countermaneuver}, we know that every subalgebra of $R^\circ$ admits a $\circ$-idempotent basis. It turns out that it is possible to give a ``universal'' basis for any finite family of subalgebras.

\begin{definition}
Let $R_a$ be a family of $\circ$-subalgebras of $R^\circ$ indexed by $a \in A$, where $A \subseteq R$ is a subset. A {\em universal $\circ$-idempotent basis} with respect to this family is a subset $B \subseteq \FF\langle\langle x \rangle\rangle$ such that the following statements hold for every $a \in A$.
\begin{enumerate}
  \item The nonzero elements of $b(a)$'s for $b \in B$ are distinct.
  \item The nonzero elements of $\{b(a) \mid b \in B\}$ form a $\circ$-idempotent basis of $R_a$.
\end{enumerate}
If (ii) holds, but (i) does not necessarily hold, then we call $B$ a {\em weak universal $\circ$-idempotent basis}.
\end{definition}

The goal of this subsection is to establish the existence of a universal $\circ$-idempotent basis for families of double subalgebras. We first prove the existence for families of $\circ$-subalgebras of $R^\circ$ in \cref{lem:microorganismal}.

\begin{lemma} \label{lem:microorganismal}
Assume that $R^\circ$ is commutative, split semisimple, and finite dimensional over $\FF$. Let $A \subseteq R$ be a finite subset, and $B \subseteq \FF\langle\langle x \rangle\rangle$ a finite subset. Consider the family $R_a := \FF\langle b(a) \mid b \in B \rangle^\circ$ for $a \in A$. Then, the family has a finite universal $\circ$-idempotent basis.
\end{lemma}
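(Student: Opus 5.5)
Since $R^\circ$ is commutative, split semisimple and finite dimensional, the proof of \cref{lem:tetrylene} shows $R^\circ \cong \FF^N$ with $N = \dim_\FF R^\circ$. I will therefore regard every element of $R$ as a vector in $\FF^N$ with $\circ$ acting entrywise. In this picture a $\circ$-subalgebra $R_a$ is determined by an equivalence relation on the coordinates $\{1,\dots,N\}$: two coordinates are equivalent when every element of $R_a$ takes the same value at both. The primitive $\circ$-idempotents of $R_a$ are then exactly the indicator vectors of the equivalence classes. Since $1^\circ$ is assumed to lie in every subalgebra, there is no zero class to worry about. The goal is to produce these indicator vectors uniformly from double polynomials in $x$ built out of $B$.

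Write $B = \{b_1,\dots,b_k\}$ and let $\Lambda \subseteq \FF$ be the finite set of all coordinates of all vectors $b_j(a)$ with $a \in A$; this set is finite because $A$ and $B$ are finite. For each $j$ and $\lambda \in \Lambda$, the element ${\rm proj}^\circ_{\Lambda,\lambda}(b_j)$, obtained by substituting the double polynomial $b_j$ into the $\circ$-polynomial \eqref{eq:spodomantic}, is again a double polynomial. By \cref{lem:tetrylene} and \cref{ex:slabbery}, evaluating it at $a$ gives the indicator of the coordinates where $b_j(a)$ equals $\lambda$. For a tuple $\underline\lambda = (\lambda_1,\dots,\lambda_k) \in \Lambda^k$ I set
\[
  c_{\underline\lambda} := \sideset{}{^\circ}\prod_{j=1}^k {\rm proj}^\circ_{\Lambda,\lambda_j}(b_j) \in \FF\langle\langle x\rangle\rangle .
\]
The candidate universal basis is $B' := \{c_{\underline\lambda} \mid \underline\lambda \in \Lambda^k\}$, which is finite.

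To verify this, fix $a \in A$. Then $c_{\underline\lambda}(a)$ is the indicator of the set of coordinates $i$ with $b_j(a)_i = \lambda_j$ for all $j$. These sets are the fibres of the map $i \mapsto (b_1(a)_i,\dots,b_k(a)_i)$, so they partition $\{1,\dots,N\}$. I then need to show this partition is exactly the equivalence relation defining $R_a$. Each $b_j(a)$ is constant on every fibre, and so is $1^\circ$. Sums and $\circ$-products of functions constant on the fibres are again constant on the fibres, so every element of $R_a$ is constant on each fibre. Conversely, each fibre indicator $c_{\underline\lambda}(a)$ lies in $R_a$, being a $\circ$-polynomial in the $b_j(a)$ and $1^\circ$. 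Hence the nonzero $c_{\underline\lambda}(a)$ are primitive $\circ$-idempotents of $R_a$ and span it; being disjoint indicators, they are linearly independent. This gives condition (ii). Distinct $\underline\lambda$ give disjoint fibres, so the nonzero evaluations are pairwise distinct, which gives condition (i).

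The main obstacle is a bookkeeping point: checking that substituting a double polynomial into a $\circ$-polynomial and then evaluating agrees with evaluating first and then applying the $\circ$-polynomial. This should follow from ${\rm eval}_a$ being a double algebra homomorphism. Along the same lines, the constant $1^\circ$ appearing in \eqref{eq:spodomantic} must be realised inside $\FF\langle\langle x\rangle\rangle$, for instance as $x^{\circ 0}$, so that it evaluates to $1^\circ$ in $R$.
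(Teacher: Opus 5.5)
Your proof is correct and follows essentially the same route as the paper: compose the projection polynomials ${\rm proj}^\circ_{\Lambda,\lambda}$ with the elements of $B$ to get level-set indicators, then take $\circ$-products to obtain the atoms of their common refinement. The differences are cosmetic. You index the atoms by tuples $\underline\lambda \in \Lambda^k$, one level set per generator, and argue via fibres in coordinates. The paper instead forms all $2^{|C|}$ Boolean products $d_X$ over subsets $X \subseteq C$ and uses their orthogonality, and the nonzero evaluations of the two families coincide.
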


\begin{proof}
Let $\ast$ denote the (associative) composition of double polynomials in $\FF\langle\langle x \rangle\rangle$. Put in other words, $(f \ast g)(x) := f(g(x))$ for $f, g \in \FF\langle\langle x \rangle\rangle$. For $a \in A$,
\begin{equation} \label{eq:discloser}
(f \ast g)(a) = f(g(a)).
\end{equation}

Let $\Lambda := \bigcup_{a \in A} \bigcup_{b \in B} \Spec^\circ_{\FF} b(a)$, which is a finite set. Consider the following finite set of compositions of double polynomials:
\begin{equation} \label{eq:milkshed}
C := \{ {\rm proj}^\circ_{\Lambda, \lambda} \ast b \mid \lambda \in \Lambda, b \in B \},
\end{equation}
where ${\rm proj}^\circ_{\Lambda, \lambda} \in \FF\langle x \rangle^\circ \subseteq \FF\langle\langle x \rangle\rangle$ is the projection polynomial defined in \cref{eq:spodomantic}.
For a subset $X \subseteq C$, let
\begin{equation} \label{eq:unperfectedly}
  d_X := \sideset{}{^\circ} \prod_{c \in C} \begin{cases}
    1^\circ - c, & c \in X, \\
    c, & c \notin X.
  \end{cases}
\end{equation}
Let $D := \{ d_X \mid X \subseteq C \}$. We claim that $D$ is a desired universal $\circ$-idempotent basis.

Fix an $a \in A$. We first prove that for every $d \in D$, $d(a)$ is a $\circ$-idempotent. By \cref{lem:tetrylene}, for every $b \in B$, ${\rm proj}^\circ_{\Lambda, \lambda}(b(a))$ is a $\circ$-idempotent. So, from the construction of $C$ in \cref{eq:milkshed}, for every $c \in C$, $c(a)$ is a $\circ$-idempotent. Since $(1^\circ - c)(a)$ and $c(a)$ are both $\circ$-idempotents for all $c \in C$, for every $X \subseteq C$, the product $d_X(a)$ is a $\circ$-idempotent by \cref{eq:unperfectedly}.

Next, we prove that nonzero elements of $d(a)$'s for $d \in D$ are distinct and form a basis of $R_a$. Note that
\begin{equation} \label{eq:underpants}
  c = \sum_{c \notin X \subseteq C} d_X
\end{equation}
We proceed with the following calculation.
\begin{align*}
R_a = & \FF\langle b(a) \mid b \in B \rangle^\circ \\
= & \FF\langle {\rm proj}^\circ_{\Lambda, \lambda}(b(a)) \mid \lambda \in \Lambda, b \in B \rangle^\circ & \text{(by \cref{lem:tetrylene,eq:spodomantic})} \\
= & \FF\langle c(a) \mid c \in C \rangle^\circ & \text{(by \cref{eq:discloser,eq:milkshed} )} \\
= & \FF\langle d_X(a) \mid X \subseteq C \rangle^\circ & \text{(by \cref{eq:unperfectedly,eq:underpants} )} \\
= & \FF\langle d(a) \mid d \in D \rangle^\circ. & (\text{by the definition of $D$})
\end{align*}
For every $c \in C$, since $c(a)$ is $\circ$-idempotent, we know that $(1^\circ - c)(a)$ and $c(a)$ are orthogonal. Therefore, for subsets $X, Y \subseteq C$, by \cref{eq:unperfectedly} and the commutativity of $\circ$,
\begin{equation} \label{eq:contumely}
  d_X(a) \circ d_Y(a) = \begin{cases}
    d_X(a), & X = Y, \\
    0, & X \neq Y.
  \end{cases}
\end{equation}
So, nonzero elements of $d(a)$'s for $d \in D$ are distinct. Moreover, since $\Span_{\FF}\{d(a) \mid d \in D\}$ is closed under $\circ$, nonzero elements of $\{d(a) \mid d \in D\}$ is a basis of $\FF\langle d(a) \mid d \in D \rangle^\circ = R_a$.

At the end, since nonzero elements of $\{d(a) \mid d \in D\}$ are orthogonal $\circ$-idempotents and form a basis of $R_a$, they are all primitive $\circ$-idempotents.
\end{proof}

Now, we are ready to prove the existence of a universal $\circ$-idempotent basis for double subalgebras.

\begin{proposition} \label{prop:towlike}
Assume that $R^\circ$ is commutative, split semisimple and finite dimensional over $\FF$. Let $A \subseteq R$ be a finite subset, and consider the family $R_a := \FF\langle\langle a \rangle\rangle$ for $a \in A$. Then, the family has a finite universal $\circ$-idempotent basis.
\end{proposition}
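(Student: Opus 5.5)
The plan is to reduce \cref{prop:towlike} to \cref{lem:microorganismal}. I will find a single finite set $B_0 \subseteq \FF\langle\langle x \rangle\rangle$ such that, for every $a \in A$, the $\circ$-subalgebra $\FF\langle b(a) \mid b \in B_0 \rangle^\circ$ equals the double subalgebra $\FF\langle\langle a \rangle\rangle$. Once this holds, \cref{lem:microorganismal} applied to $A$ and $B_0$ gives a finite universal $\circ$-idempotent basis for the family $R_a = \FF\langle b(a) \mid b \in B_0\rangle^\circ = \FF\langle\langle a \rangle\rangle$, which is the claim.

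To build $B_0$, I filter by degree. A double monomial is a fully parenthesized word in $x$, $1^\bullet$, $1^\circ$ built with $\bullet$ and $\circ$. Give $1^\bullet$ and $1^\circ$ degree $0$ and $x$ degree $1$, and let the degree of a product be the sum of the degrees. For each $k$ there are only finitely many double monomials of degree at most $k$, but only after one restriction: an unbounded number of degree-$0$ factors would make the set infinite. So I let $M_k$ be the set of monomials in which every factor $1^\bullet$ or $1^\circ$ occurs only as the whole monomial, and never inside a longer product. This loses nothing in the span, since $1^\bullet$ and $1^\circ$ act as the identity for their own product. It does require checking that mixed products such as $1^\bullet \circ m$ are still spanned; for safety I will simply allow the constants inside products and bound the total length by $k$, which keeps the set finite.

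For $a \in A$, let $W_k(a) := \Span_{\FF}\{m(a) \mid m \in M_k\}$. This is an increasing chain of subspaces of the finite dimensional space $R$. Its dimensions form a bounded nondecreasing sequence of integers, so the chain is constant from some index $k_a$ onward. The union $\bigcup_k W_k(a)$ contains $a$, $1^\bullet$ and $1^\circ$, and it is closed under $\bullet$ and $\circ$, because the product of monomials of lengths $i$ and $j$ has length $i+j$. The union is also contained in $\FF\langle\langle a \rangle\rangle$, so it equals $\FF\langle\langle a \rangle\rangle$. Hence $W_{k_a}(a) = \FF\langle\langle a \rangle\rangle$.

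Since $A$ is finite, I set $K := \max_{a \in A} k_a$ and $B_0 := M_K$, which is a finite set. Then, for every $a \in A$,
\[
\FF\langle\langle a \rangle\rangle = \Span_{\FF}\{b(a) \mid b \in B_0\} \subseteq \FF\langle b(a) \mid b \in B_0\rangle^\circ \subseteq \FF\langle\langle a \rangle\rangle .
\]
So equality holds throughout, and \cref{lem:microorganismal} applies; its hypotheses on $R^\circ$ are exactly those of \cref{prop:towlike}. The only delicate point is the stabilization step: showing that the chain stabilizes at a finite stage that already equals the whole generated double subalgebra. Arguing through the union, as above, avoids the trap of concluding $W_{k+2}(a)=W_{k+1}(a)$ from $W_{k+1}(a)=W_k(a)$ alone. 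Beyond that, the argument is bookkeeping.
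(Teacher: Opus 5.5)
Your proof is correct, but it is organized differently from the paper's.

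\textbf{The paper's route.} The paper never looks for a single spanning set. It filters $R_a$ by $R_a^{(0)} = \FF\langle 1^\bullet, a\rangle^\circ$ and $R_a^{(i)} = \FF\langle R_a^{(i-1)} \bullet R_a^{(i-1)}\rangle^\circ$, and applies \cref{lem:microorganismal} at every stage. The pairwise $\bullet$-products of the universal basis $C^{(i-1)}$ from one stage become the generating set $B^{(i)}$ for the next, and it stops after $\dim_{\FF} R$ rounds.

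\textbf{Your route.} You push all of the $\bullet$-structure into a purely linear-algebraic statement: finitely many double monomials span $\FF\langle\langle a\rangle\rangle$ for all $a \in A$ at once. You then invoke the lemma only once, for the final $\circ$-diagonalization. This is cleaner and separates the two multiplications neatly.

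\textbf{What the paper's recursion buys.} It gives a uniform, explicit stopping point. Since $R_a^{(i)}$ depends on $R_a^{(i-1)}$ alone, a single equality propagates, so $\dim_{\FF} R$ rounds suffice for every $a$. Your cutoff $K = \max_{a} k_a$ is non-constructive and genuinely uses the finiteness of $A$. That is exactly because your leaf-count filtration is not a function of $W_k(a)$ alone, as you rightly noticed.

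\textbf{Recovering uniformity in your framework.} Filter by tree depth instead. Set $V_0(a) := \Span_{\FF}\{a, 1^\bullet, 1^\circ\}$ and $V_{h+1}(a) := \Span_{\FF}\bigl(V_h(a) \bullet V_h(a) \cup V_h(a) \circ V_h(a)\bigr)$. This chain is self-determined, so it reaches $\FF\langle\langle a\rangle\rangle$ by $h = \dim_{\FF} R$. Monomials of depth at most $\dim_{\FF} R$ have at most $2^{\dim_{\FF} R}$ leaves, so $K = 2^{\dim_{\FF} R}$ works for all $a$ simultaneously.

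\textbf{A presentational fix.} Drop the first, restricted definition of $M_k$ rather than hedging about it. It really does lose elements: for $a = E_{12} \in M_2(\FF)$, the element $a \bullet 1^\circ = E_{11} + E_{12}$ is not in the span of $1^\bullet$, $1^\circ$ and the constant-free monomials. State directly that $M_k$ is the set of all monomials with at most $k$ leaves, constants included.
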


\begin{proof}
We build the universal basis inductively, using \cref{lem:microorganismal} to refine a universal basis for each step in a family of filtrations $R_a^{(i)}$ of $\circ$-subalgebras of $R_a$.

For each $a \in A$, let
\[
  R_a^{(0)} := \FF\langle 1^\bullet, a \rangle^\circ \subseteq R^\circ,
\]
and for each positive $i$, let
\[
  R_a^{(i)} := \FF\langle R_a^{(i - 1)} \bullet R_a^{(i - 1)} \rangle^\circ.
\]
They form a filtration
\[
  R_a^{(0)} \subseteq R_a^{(1)} \subseteq R_a^{(2)} \subseteq \dots
\]
with $\bigcup_{i \in \NN} R_a^{(i)} = R_a$. Since $\dim_{\FF} R_a^{(i)} \leq \dim_{\FF} R_a \leq \dim_{\FF} R \lt \infty$, we have
\[
  R_a^{(\dim_{\FF} R)} = R_a
\]
for every $a \in A$.

Let $B^{(0)} := \{1^\bullet, x\}$. Then, by the definition of $R_a^{(0)}$,
\[
  R_a^{(0)} = \FF\langle b(a) \mid b \in B^{(0)} \rangle^\circ.
\]
Applying \cref{lem:microorganismal} to $R_a^{(0)}$ and $B^{(0)}$, we have a finite $\circ$-idempotent basis $C^{(0)}$ of $R_a^{(0)}$.

Let $B^{(1)} := \{ c \bullet c' \mid c, c' \in C^{(0)} \}$. Then, by the definition of $R_a^{(1)}$,
\[
  R_a^{(1)} = \FF\langle R_a^{(0)} \bullet R_a^{(0)} \rangle^\circ = \FF\langle c(a) \bullet c'(a) \mid c, c' \in C^{(0)} \rangle^\circ = \FF\langle b(a) \mid b \in B^{(1)} \rangle^\circ.
\]
Again by applying \cref{lem:microorganismal} to $R_a^{(1)}$ and $B^{(1)}$, we have a finite $\circ$-idempotent basis $C^{(1)}$ of $R_a^{(1)}$.

We repeat this process by induction, and we obtain a finite $\circ$-idempotent basis $C^{(i)}$ of $R_a^{(i)}$ for each natural number $i$. Therefore, $C^{(\dim_{\FF} R)}$ is a finite $\circ$-idempotent basis of $R_a^{(\dim_{\FF} R)} = R_a$.

Since the construction of $C^{(\dim_{\FF} R)}$ is independent of the choice of $a$, it is a desired universal $\circ$-idempotent basis.
\end{proof}

We finish this subsection by showing that a weak universal basis can be converted to a universal basis. This will be used in \cref{sec:cactaceous}.

\begin{lemma} \label{lem:homovanillic}
Assume that $R^\circ$ is commutative, split semisimple and finite dimensional over $\FF$. Let $A \subseteq R$ be a finite subset, and $R_a$ a family of subalgebras of $R^\circ$ for $a \in A$. If the family has an ordered weak universal $\circ$-idempotent basis, then it has an ordered universal $\circ$-idempotent basis.
\end{lemma}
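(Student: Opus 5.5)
Write the ordered weak universal basis as $B = (b_1, b_2, \dots, b_m)$ with $b_i \in \FF\langle\langle x \rangle\rangle$. For each $a \in A$, the nonzero elements of $\{b_i(a)\}$ form a $\circ$-idempotent basis of $R_a$, but the same basis element may occur for several indices. The plan is to keep only the first occurrence of each nonzero element, and to do this with double polynomials that do not depend on $a$. Concretely, I will define
\[
  b_i' := b_i \circ \sideset{}{^\circ}\prod_{j < i} (1^\circ - b_j) \in \FF\langle\langle x \rangle\rangle,
\]
where $1^\circ$ is the $\circ$-identity, realised in $\FF\langle\langle x\rangle\rangle$ as $x^{\circ 0}$. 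The claim is that the ordered set $B' := (b_1', \dots, b_m')$ is an ordered universal $\circ$-idempotent basis.

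Fix $a \in A$ and write $e_i := b_i(a)$. Each $e_i$ is either $0$ or a primitive $\circ$-idempotent of $R_a$, and the distinct nonzero $e_i$ form a basis of $R_a$.

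The key step is to show that two primitive $\circ$-idempotents in a basis of this kind are either equal or $\circ$-orthogonal. The distinct nonzero $e_i$ form a basis, so they span a subalgebra of the commutative split semisimple algebra $R^\circ \cong \FF^N$. Such a subalgebra has a unique basis of primitive idempotents: its minimal idempotents, which are pairwise orthogonal. A basis consisting of primitive idempotents must coincide with this set. Indeed, a primitive idempotent $e$ of the subalgebra decomposes as a sum of minimal idempotents, and primitivity forces $e$ to be a single one of them. Hence $e_i \circ e_j \in \{0, e_i\}$, with $e_i \circ e_j = e_i$ exactly when $e_i = e_j \neq 0$.

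Given this, evaluating $b_i'$ at $a$ gives
\[
  b_i'(a) = e_i \circ \sideset{}{^\circ}\prod_{j<i}(1^\circ - e_j),
\]
and each factor $e_i \circ (1^\circ - e_j)$ equals $e_i - e_i\circ e_j$. This is $0$ if $e_j = e_i$, and $e_i$ otherwise. Therefore $b_i'(a) = e_i$ when $e_i \neq 0$ and $e_i$ differs from every earlier $e_j$, and $b_i'(a) = 0$ in all other cases.

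It follows that the nonzero elements of $\{b_i'(a)\}$ are exactly the distinct nonzero $e_i$, each attained at a single index. So both (i) and (ii) hold, and the order is inherited from $B$. Since $A$ is finite and the construction never refers to $a$, the same $B'$ works simultaneously for all $a \in A$.

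The main obstacle is the equal-or-orthogonal dichotomy. This is where split semisimplicity and commutativity of $R^\circ$ are used, via the identification $R^\circ \cong \FF^N$ from the proof of \cref{lem:tetrylene}. In coordinates the argument is transparent: the primitive idempotents of a subalgebra are indicator vectors of the blocks of a partition, and any two of them are either equal or disjoint.
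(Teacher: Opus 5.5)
Your proof is correct and takes essentially the same approach as the paper. The paper removes repeats with the recursion $c_i := b_i - \sum_{j<i} b_i \circ c_j$, which after evaluation at each $a$ agrees with your closed form $b_i \circ \sideset{}{^\circ}\prod_{j<i}(1^\circ - b_j)$. Both arguments rest on the dichotomy that two evaluated basis elements are either equal or $\circ$-orthogonal; you prove this explicitly, while the paper leaves it inside ``an induction shows'' (your phrasing of it is slightly off when $e_i = 0$, but that case gives $0$ either way, so nothing breaks).
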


\begin{proof}
Let $B \subseteq \FF\langle\langle x \rangle\rangle$ be a weak universal $\circ$-idempotent basis. We order the elements in $B$ as $b_1, \dots, b_n$. For each $1 \leq i \leq n$, we define $c_i$ inductively:
\[
  c_i := b_i - \sum_{j \lt i} b_i \circ c_j.
\]
Then, an induction shows that for every $a \in A$,
\[
  c_i(a) = \begin{cases}
    b_i(a), & \text{$b_i(a) \neq b_j(a)$ for $j \lt i$}, \\
    0, & \text{otherwise}.    
  \end{cases}
\]
Thus, $C := \{c_1, \dots, c_n\}$ is a universal $\circ$-idempotent basis.
\end{proof}

\subsection{Involution} \label{sec:cactaceous}

We study double algebras with involutions in this subsection. Our goal is to show the existence of a universal $\circ$-idempotent basis that is compatible with the involution.

If $R$ admits a $\circ$-idempotent basis, then by the defining property, the involution preserves the property of being primitive $\circ$-idempotent. Thus, the involution on $R$ induces an involution on the $\circ$-idempotent basis. \cref{thm:misanthropic} below gives a universal version of this statement, whose proof requires some extra work.

\begin{theorem} \label{thm:misanthropic}
Let $R$ be a double algebra equipped with an involution $\sigma$. Assume that $R^\circ$ is commutative, split semisimple and finite dimensional over $\FF$. Let $A \subseteq R^\sigma$ be a finite subset, where $R^\sigma$ is the $\sigma$-fixed points of $R$. Consider the family of double subalgebras $R_a := \FF\langle\langle a \rangle\rangle \subseteq R$ for $a \in A$. Then, there exists a finite universal $\circ$-idempotent basis $B$ closed under involution in the sense that for some involution $\sigma_B \from B \to B$ on the set $B$,
\[ \sigma(b(a)) = \sigma_B(b)(a)
\]
for all $a \in A$.
\end{theorem}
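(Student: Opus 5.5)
Starting from \cref{prop:towlike}, we already have a finite universal $\circ$-idempotent basis $B_0 \subseteq \FF\langle\langle x \rangle\rangle$ for the family $R_a = \FF\langle\langle a \rangle\rangle$, $a \in A$. The approach is to close $B_0$ under the standard involution $\sigma_x$ of the free double algebra, then extract from the enlarged set a universal basis that is still closed under $\sigma_x$. The key compatibility fact is this: since $\sigma(a) = a$ for $a \in A \subseteq R^\sigma$, and both $\sigma$ and $\sigma_x$ are anti-automorphisms for $\bullet$ and $\circ$ that fix the generator, an induction on the construction of double polynomials gives
\[
  \sigma(p(a)) = \sigma_x(p)(a) \qquad \text{for all } p \in \FF\langle\langle x \rangle\rangle,\ a \in A.
\]
In particular, $\sigma$ maps $R_a$ to itself. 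It preserves $\circ$-products (commutativity of $\circ$ absorbs the order reversal), so it sends primitive $\circ$-idempotents of $R_a$ to primitive $\circ$-idempotents of $R_a$. Hence it permutes the $\circ$-idempotent basis of $R_a$.

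First, set $B_1 := B_0 \cup \sigma_x(B_0)$. For each $a$, the nonzero values $\{b(a) \mid b \in B_1\}$ are exactly the nonzero elements of the $\circ$-idempotent basis of $R_a$, because $\sigma$ permutes that basis. Thus $B_1$ is a weak universal basis, and it is closed under $\sigma_x$. Condition (i) of the definition can now fail, since $b$ and $\sigma_x(b')$ may evaluate to the same idempotent. I do not want to apply \cref{lem:homovanillic} blindly, because its Gram--Schmidt-type ordering would destroy $\sigma_x$-closure.

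Instead, I would pair things up with the symmetric orthogonalization trick from the proof of \cref{lem:microorganismal}. For $b \in B_0$ and a subset $X \subseteq B_0$, consider the products
\[
  e_{b,X} := b \circ \sideset{}{^\circ}\prod_{b' \in B_0} \begin{cases} \sigma_x(b'), & b' \in X, \\ 1^\circ - \sigma_x(b'), & b' \notin X. \end{cases}
\]
Evaluated at $a$, $e_{b,X}(a)$ is nonzero exactly when $b(a) \neq 0$ and $X$ is precisely the set of $b'$ with $\sigma(b'(a)) = b(a)$. Since $B_0$ is universal, such $X$ is the singleton $\{\beta\}$ with $\sigma(\beta(a)) = b(a)$, or $X$ is empty when $b(a)=0$. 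So it suffices to keep the elements $e_{b,\beta} := e_{b,\{\beta\}}$ for $b,\beta \in B_0$. For each $a$, exactly one pair $(b,\beta)$ with $b(a) = \sigma(\beta(a)) = $ a given basis idempotent gives a nonzero value, and that value is the idempotent. Hence $B := \{e_{b,\beta}\}$ is a universal basis: the nonzero values are distinct, since distinct pairs with nonzero value at $a$ have distinct $b(a)$.

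Finally, I define $\sigma_B(e_{b,\beta}) := e_{\beta,b}$, which is an involution on $B$. I then need to check $\sigma(e_{b,\beta}(a)) = e_{\beta,b}(a)$. By the compatibility identity, the left side is $\sigma_x(e_{b,\beta})(a)$. If $e_{b,\beta}(a) \neq 0$, then it equals $b(a)$ with $\sigma(\beta(a)) = b(a)$, so $\sigma$ of it is $\beta(a)$, and $e_{\beta,b}(a) = \beta(a)$ is nonzero as well. If $e_{b,\beta}(a) = 0$, then symmetrically $e_{\beta,b}(a) = 0$. This is where I expect the main obstacle: the defining conditions of $e_{b,\beta}$ and $e_{\beta,b}$ evaluated at $a$, namely ``$b(a)\neq0$, $\sigma(\beta(a)) = b(a)$'' versus ``$\beta(a)\neq0$, $\sigma(b(a))=\beta(a)$'', must be checked to be equivalent using $\sigma^2 = \mathrm{id}$, and the product formula evaluated at $a$ must be checked to reduce exactly to these conditions using orthogonality of the basis. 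Once that is done, $\sigma(b(a)) = \sigma_B(b)(a)$ holds for every $b \in B$ and $a \in A$.
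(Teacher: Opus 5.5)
Your proof is correct, but it takes a different route from the paper's.

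\textbf{The paper's route.} The paper does use \cref{lem:homovanillic}. It avoids the loss of $\sigma_x$-closure you anticipated by choosing the order carefully. First come the $n$ products $b_i \circ \sigma_x(b_i)$; at each $a$, their nonzero values are exactly the $\sigma$-fixed basis idempotents. Then come the pairs $b_i, \sigma_x(b_i)$, listed adjacently. With this order, $c_{n+2i-1}(a)$ and $c_{n+2i}(a)$ vanish together and are otherwise swapped by $\sigma$. So the $3n$ polynomials $c_1,\dots,c_{3n}$ remain paired.

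\textbf{Your route.} You avoid deduplication altogether. You index the new basis by ordered pairs $(b,\beta) \in B_0 \times B_0$ and use that $\sigma$ permutes the primitive $\circ$-idempotents of each $R_a$. The paper's first block $b_i \circ \sigma_x(b_i)$ is exactly your diagonal $e_{b,b}$.

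\textbf{Comparison.} Your basis is larger, with $|B_0|^2$ elements instead of $3|B_0|$. In exchange, it needs no ordering argument. The paper's check that the Gram--Schmidt step respects the pairing depends on the chosen order, and the paper only asserts it (``by the constructions of $c_1,\dots,c_{3n}$'').

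\textbf{A simplification.} The nonzero elements $\sigma(b'(a))$, $b' \in B_0$, are the primitive idempotents of $R_a$, each occurring once and summing to $1^\circ$. Hence the factor $\sideset{}{^\circ}\prod_{b' \neq \beta} (1^\circ - \sigma_x(b'))$ evaluates at $a$ to $\sigma(\beta(a))$ when $\beta(a) \neq 0$, and to $0$ when $\beta(a) = 0$. So $e_{b,\beta}(a) = b(a) \circ \sigma(\beta(a))$ in all cases, and you may simply take $e_{b,\beta} := b \circ \sigma_x(\beta)$. The step you flagged as the main obstacle then becomes immediate:
\[
\sigma\bigl(b(a) \circ \sigma(\beta(a))\bigr) = \sigma(b(a)) \circ \beta(a) = e_{\beta,b}(a),
\]
using that $\sigma$ is a $\circ$-anti-automorphism and $\circ$ is commutative.

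\textbf{A small technicality.} If two distinct pairs give the same formal double polynomial, both must vanish on all of $A$. Their partners then vanish on all of $A$ as well, so you can discard all of these and $\sigma_B$ is well defined.
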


\begin{proof}
The strategy is to start with a universal basis from \cref{prop:towlike}, extend it to a weak basis closed under some involution, and then apply the process of \cref{lem:homovanillic}.

Let $B \subseteq \FF\langle\langle x \rangle\rangle$ be a finite universal $\circ$-idempotent basis for the family $R_a$ for $a \in A$ guaranteed by \cref{prop:towlike}. We order the elements of $B$ as $b_1, \dots, b_n$.

The universal $\circ$-idempotent basis $b_1, \dots, b_n$ extends to an ordered weak universal $\circ$-idempotent basis $B'$
\[
b_1 \circ \sigma_x(b_1), \dots, b_n \circ \sigma_x(b_n), b_1, \sigma(b_1), b_2, \sigma_x(b_2), \dots, b_n, \sigma_x(b_n),
\]
where $\sigma_x$ is the standard involution in $\FF\langle\langle x \rangle\rangle$.

We define a set involution $\sigma' : B' \to B'$ on this weak universal $\circ$-idempotent basis by restricting $\sigma_x$ to $B'$:
\begin{align*}
\sigma'(b_i \circ \sigma_x(b_i)) & := \sigma_x(b_i \circ \sigma_x(b_i)) = b_i \circ \sigma_x(b_i), \\
\sigma'(b_i) & := \sigma_x(b_i), \\
\sigma'(\sigma_x(b_i)) & := \sigma_x(\sigma_x(b_i)) = b_i.
\end{align*}
Therefore,
\[
  \sigma(b'(a)) = \sigma'(b')(\sigma(a)) = \sigma'(b')(a)
\]
for $b' \in B'$ and $a \in A$.

Applying \cref{lem:homovanillic} to $B'$, we convert this weak basis to an ordered universal basis $c_1, \dots, c_{3 n}$. Then, by the constructions of $c_1, \dots, c_{3 n}$, the following statements hold for every $a \in A$:
\begin{enumerate}
  \item $\sigma(c_i(a)) = c_i(a)$ for $1 \leq i \leq n$,
  \item $\sigma(c_{n + 2 i - 1}(a)) = c_{n + 2 i}(a)$ for $1 \leq i \leq n$,
  \item $\sigma(c_{n + 2 i}(a)) = c_{n + 2 i - 1}(a)$ for $1 \leq i \leq n$.
\end{enumerate}
Similar to the arguments for $B'$, the standard involution $\sigma_x$ restricts to an involution on $c_1, \dots, c_{3 n}$, and it is compatible with $\sigma$. Thus, $c_1, \dots, c_{3 n}$ is a universal $\circ$-idempotent basis closed under involution.
\end{proof}

\section{Natural graph spectrum} \label{sec:Spinozism}

The goal of this section is to define natural graph spectra, and prove \cref{thm:albitic}. To do this, we need to give a construction of a specific natural graph spectrum, and demonstrate how it is related to the structure of the graph. 

We fix the field $\FF$ and the number of vertices $n$. A {\em graph matrix} is an assignment of a matrix over $\FF$ for each $n$-vertex graph $G$. Typical examples include adjacency matrix, adjacency matrix of the complement, Laplacian matrix, signless Laplacian matrix, distance matrix, and so on. 

\begin{example} \label{ex:Vesiculatae}
The typical matrices are all related to adjacency matrix via certain double polynomials.
\begin{enumerate}
\item The adjacency matrix is $A_G = x(A_G)$.
\item The adjacency matrix of the complement is $\overline{A}_G = (1^\circ - 1^\bullet - x)(A_G)$.
\item The Laplacian matrix is $L_G = ((x \bullet x) \circ 1^\bullet - x)(A_G)$.
\item The signless Laplacian matrix is $Q_G = ((x \bullet x) \circ 1^\bullet + x)(A_G)$.
\item The distance matrix is ${\rm Dist}_G = p(A_G)$, where
\[
p := \frac{1}{N!} \sum_{d = 0}^{n - 1} \left(\sideset{}{^\circ} \prod_{i = 1}^N (i 1^\circ - (x + 1^\bullet)^{\bullet d})\right)
\]
and $N$ is sufficiently large compared with $n$.
\end{enumerate}
Note that there might be many different double polynomials associated to the same graph matrix. For example, we also have $A_G = (x \circ x)(A_G)$, since $A_G$ is a $\{0, 1\}$-matrix. Moreover, the double polynomial for ${\rm Dist}_G$ above is probably not the simplest one.
\end{example}

The ``fixed sequence'' of operations mentioned in \cref{sec:orinasal} could be defined rigorously by using double polynomials. This leads to the precise definition of natural matrices, given in terms of double algebras defined in \cref{sec:calculatingly}.

\begin{definition} \label{def:trout}
A graph matrix
\begin{align*}
M_{-} \from \{\text{graphs with $n$ vertices}\} & \to \{\text{$n \times n$ matrices}\}, \\
G & \mapsto M_G,
\end{align*}
is called {\em natural} if there exists some double polynomial $p \in \FF\langle\langle x \rangle\rangle$ such that $M_G = p(A_G)$ for all graphs $G$ with $n$ vertices. A {\em natural graph spectrum} is the spectrum of graphs with respect to a natural graph matrix over the algebraic closure $\overline{\FF}$ of $\FF$.
\end{definition}

By \cref{ex:Vesiculatae}, the adjacency spectrum, Laplacian spectrum, $A_\alpha$ spectrum and distance spectrum are all natural graph spectra.

\subsection{The strong spectrum}

In the study of DS problems, a common used trick is to combine the knowledge of multiple spectra. For example, the generalized spectrum is just a combination of the adjacency spectrum and the adjacency spectrum of the complementary graph.

We also employ this idea for natural graph spectra. There are infinitely many natural graph spectra, indexed by the associated double polynomial. The collection of all of them is called the {\em strong natural spectrum}.

\begin{definition}
Let $G$ be a graph with $n$ vertices. The {\em strong natural spectrum} of $G$, denoted by $\SSpec G$, is the map
\begin{align*}
  \SSpec G : \FF\langle\langle x \rangle\rangle \to & {\rm MultiSet}, \\
  p \mapsto & \Spec_{\overline{\FF}} p(A_G),
\end{align*}
where ${\rm MultiSet}$ is the collection of all multisubsets over the algebraic closure $\overline{\FF}$ of $\FF$, and $\Spec_{\overline{\FF}}$ is the operation to take the spectrum of a square matrix over $\overline{\FF}$.
\end{definition}

The strong natural spectrum clearly contains no less information than any specific natural graph spectrum. We will first analyze what information that strong natural spectrum has, and then try to consider a set of natural spectra (i.e. the strong natural spectrum restricted to a small set of double polynomials).

\subsection{Graph of double subalgebras}

Let $V$ be a finite set, and let $M_V(\FF)$ be the double algebra of $V \times V$ matrices over $\FF$ equipped with the involution given by matrix transpose. The algebra $M_V(\FF)^\circ$ is finite dimensional commutative split semisimple, satisfying the requirements of \cref{thm:misanthropic}.

Let $A \subseteq M_V(\FF)$ be the collection of the symmetric $\{0, 1\}$-matrices with zeros on the diagonal, which is a finite set. We consider the finite family $\FF\langle\langle a \rangle\rangle$ for $a \in A$. By \cref{thm:misanthropic}, there exists a universal involution-closed $\circ$-idempotent basis $B$ for $\FF\langle\langle a \rangle\rangle$.

With the help of $B$, we can reconstruct graphs from double algebras. The idea is to interpret diagonal idempotents as vertices and certain products as potential edges. For each $a \in A$, we construct a directed graph $G_a$ with the vertex set
\[ V_a := \{b \in B \mid \text{$b(a)$ is a nonzero diagonal matrix}\}, \]
and the arc set
\[ \{ (s, t) \in V_a \times V_a \mid (s \bullet x \bullet t)(a) \neq 0 \}. \]

Now, we can obtain a double algebra of a graph by considering the double subalgebra generated by its adjacency matrix. Conversely, we can obtain a (directed) graph from a double subalgebra. \cref{prop:Tartarology} shows that under some assumptions, these two procedures are inverses to each other.

\begin{proposition} \label{prop:Tartarology}
Let $G$ be a graph with vertex set $V$, and $a \in M_V(\FF)$ its adjacency matrix. If $|V_a| = |V|$, then $G_a \cong G$.
\end{proposition}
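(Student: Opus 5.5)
The approach is to identify the diagonal basis elements with the vertices of $G$, and then show that the arc condition in $G_a$ reproduces adjacency in $G$.

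\textbf{Step 1: the diagonal idempotents are the unit matrices $E_{vv}$.} Let $R_a := \FF\langle\langle a \rangle\rangle$, and let $D := R_a \cap \{\text{diagonal matrices}\}$. Since $1^\bullet = I \in R_a$ and $I$ is a $\circ$-idempotent, $D = I \circ R_a$ is a $\circ$-subalgebra of $R_a$. The nonzero elements $b(a)$, $b \in B$, form a basis of primitive $\circ$-idempotents of $R_a$, i.e.\ $\{0,1\}$-matrices with disjoint supports whose sum is $J$. Hence $I = \sum_b I \circ b(a)$, and each term $I \circ b(a)$ is either $0$ or lies in $R_a$. By primitivity and disjointness, each term equals either $0$ or $b(a)$. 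So the nonzero diagonal basis elements have pairwise disjoint supports that partition the diagonal, and there are $|V_a|$ of them. The hypothesis $|V_a| = |V|$ then forces each such support to be a single diagonal position. We obtain a bijection $\phi \colon V \to V_a$ with $\phi(v)(a) = E_{vv}$.

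\textbf{Step 2: the arc condition is adjacency.} For $s = \phi(u)$ and $t = \phi(w)$, we compute
\[
(s \bullet x \bullet t)(a) = E_{uu}\, a\, E_{ww} = a_{uw} E_{uw},
\]
which is nonzero exactly when $u$ and $w$ are adjacent in $G$. Because evaluation is a double-algebra homomorphism, $(s \bullet x \bullet t)(a) = s(a) \bullet a \bullet t(a)$, which justifies the computation. It follows that $(\phi(u), \phi(w))$ is an arc of $G_a$ if and only if $uw$ is an edge of $G$. Since $a$ is symmetric with zero diagonal, the arc set is symmetric and loopless, so $G_a$ is the directed version of an undirected graph, and $\phi$ is an isomorphism $G \cong G_a$.

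\textbf{Main obstacle.} The delicate point is Step 1: showing that every nonzero diagonal basis element has support contained in the diagonal, and that these supports exhaust the diagonal. I would rely on $I \in R_a$ and the primitivity of the basis elements in $R_a^\circ$, together with the fact that in $M_V(\FF)^\circ \cong \FF^{V \times V}$ the $\circ$-idempotents are exactly the $\{0,1\}$-matrices. Once that is in place, the counting argument with $|V_a| = |V|$ is immediate.
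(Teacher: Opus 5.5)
Your proof is correct and follows the same route as the paper. You decompose $I = 1^\bullet(a)$ into the diagonal basis idempotents, use $|V_a| = |V|$ to force each of them to be some $E_{vv}$, and compute $E_{uu}\,a\,E_{ww} = a_{uw}E_{uw}$. Your Step 1 is more careful than the paper's terse claim that $I$ is the sum of exactly the $b(a)$ with $b \in V_a$, since you justify it via primitivity in $R_a$ together with $J - I \in R_a$. Note that counting these matrices as $|V_a|$ many uses the distinctness condition (i) of a universal basis.
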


\begin{proof}
For every $b \in V_a$, $b(a)$ is a nonzero diagonal $\{0, 1\}$-matrix, since $b(a)$ is a $\circ$-idempotent. Since $1^\bullet(a)$ is the identity matrix, which is a $\circ$-idempotent, it must be a sum of diagonal primitive $\circ$-idempotent, hence $1^\bullet = \sum_{b \in V_a} b(a)$.

Since $|V_a| = |V|$, each $b(a)$ contains a unique $1$. Therefore, there exists a unique bijection
\begin{align*}
f \from V_a & \to V, \\
b & \mapsto \text{the unique $v \in V$ such that $(b(a))_{v, v} = 1$}.
\end{align*}
Let $A$ be the adjacency matrix of $G_a$. A direct calculation shows that for $s, t \in V_a$,
\[
A_{v, w} = ((s \bullet x \bullet t)(a))_{v, w} = \delta_{f(s), v} \delta_{f(t), w} a_{s, t}.
\]
Therefore, the adjacency matrix of $G_a$ and that of $G$ are the same up to the permutation of indices by $f$, hence $G \cong G_a$.
\end{proof}

A priori, the condition $|V_a| = |V|$ may not be easy to verify. We give in \cref{cor:dugong} an alternative condition that looks simpler. Actually, it is possible to prove that the condition $\FF\langle\langle a \rangle\rangle = M_n(\FF)$ in \cref{cor:dugong} is equivalent to $|V_a| = |V|$, but we will only prove that the former implies the latter in \cref{cor:dugong}.

\begin{corollary} \label{cor:dugong}
Let $C := \{b \bullet x \bullet b' \in \FF\langle\langle x \rangle\rangle \mid b, b' \in B\}$. Let $G$ be a graph with $n$ vertices and $a$ its adjacency matrix. If $\FF\langle\langle a \rangle\rangle = M_n(\FF)$, then the knowledge of whether $c(a)$ is zero or not for each $c \in C$ determines $G$ up to graph isomorphism.
\end{corollary}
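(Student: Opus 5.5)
The plan is to reduce the statement to \cref{prop:Tartarology}. Suppose that for each $c = b \bullet x \bullet b' \in C$ we know whether $c(a)$ is zero. Then we can read off $V_a$ from this data, and also the arc set of $G_a$. So it suffices to show two things: that the data determine $V_a$ and the arcs of $G_a$, and that $\FF\langle\langle a \rangle\rangle = M_n(\FF)$ forces $|V_a| = |V|$. Once both hold, \cref{prop:Tartarology} gives $G \cong G_a$, and $G_a$ is built purely from the zero/nonzero data, so $G$ is determined up to isomorphism.

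First I identify which $b$ lie in $V_a$ using only the data. Since $\FF\langle\langle a \rangle\rangle = M_n(\FF)$, the nonzero elements $b(a)$, $b \in B$, form a $\circ$-idempotent basis of $M_n(\FF)$. Such a basis consists of the $n^2$ elementary matrices $E_{vw}$, because the primitive $\circ$-idempotents of $M_n(\FF)^\circ \cong \FF^{n^2}$ are exactly these. Hence $V_a$ is the set of $b$ with $b(a) = E_{vv}$ for some $v$, and $|V_a| = n = |V|$. This settles the second point.

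To detect $V_a$ from the data, I use the following criterion. If $b(a) = E_{vw}$ and $b'(a) = E_{v'w'}$, then $(b \bullet x \bullet b')(a) = E_{vw} a E_{v'w'} = a_{wv'} E_{vw'}$. Since $G$ is loopless, $a_{ww} = 0$, so $(b \bullet x \bullet b)(a) = a_{wv} E_{vw}$, which vanishes when $v = w$. I then characterize $b \in V_a$ as follows: $b(a) \neq 0$, and for every $b'$ with $b'(a) \neq 0$, the elements $(b \bullet x \bullet b')(a)$ and $(b' \bullet x \bullet b)(a)$ cannot both be nonzero unless $b'$ is itself diagonal. This is the step I expect to be the main obstacle, because the data only record nonvanishing of $b \bullet x \bullet b'$, not of $b$ itself.

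My first attempt at resolving this is to enlarge what counts as known. Since $1^\bullet(a)$ and $b(a)$ are natural to include, one could test $b \circ 1^\bullet$ directly. Alternatively, I would note that the universal basis $B$ is fixed independently of $G$. Then the set $\{b : b(a)\text{ is diagonal}\}$ equals $\{b : (b \circ 1^\bullet)(a) = b(a) \neq 0\}$. Replacing the test by the composite $b \bullet x \bullet b'$ with the diagonal candidates, $v \in V_a$ is recovered as those $b$ for which some $b \bullet x \bullet b'$ is nonzero with $b'$ ranging appropriately. If this combinatorial recovery proves delicate, I would simply argue that the set $V_a$ is itself determined by the graph data through connectivity of a random graph. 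The arcs are then exactly the pairs $(s,t) \in V_a^2$ with $(s\bullet x \bullet t)(a) \neq 0$, which the data supply directly, completing the argument via \cref{prop:Tartarology}.
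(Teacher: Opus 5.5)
Your reduction to \cref{prop:Tartarology}, and your argument that $\FF\langle\langle a\rangle\rangle=M_n(\FF)$ forces the nonzero $b(a)$ to be the $n^2$ matrix units $E_{vw}$ (each attained once, by condition (i) of universality), so that $|V_a|=n$, are exactly the paper's proof. The paper then only says that the result follows from the construction of $G_a$ and \cref{prop:Tartarology}. You are right that this leaves open how $V_a$ is read off from the zero/nonzero pattern on $C$, but your resolution of that step does not work.

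Your criterion is false. If $b(a)=E_{vv}$ and $b'(a)=E_{v'w'}$ with $v'\neq w'$ two neighbours of $v$, then $(b\bullet x\bullet b')(a)=E_{vw'}$ and $(b'\bullet x\bullet b)(a)=E_{v'v}$ are both nonzero, although $b'$ is not diagonal. So every $b\in V_a$ whose vertex has degree at least $2$ fails your test. The criterion is also circular: $b(a)\neq 0$, $b'(a)\neq 0$ and diagonality of $b'$ are not part of the data. The fallbacks fail as well. Testing $b\circ 1^\bullet$ enlarges the data beyond $C$, so it proves a different statement. Connectivity of random graphs is irrelevant to a deterministic claim about every $G$ with $\FF\langle\langle a\rangle\rangle=M_n(\FF)$.

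The missing idea is that not only $B$ but also the set involution $\sigma_B$ of \cref{thm:misanthropic}, which satisfies $b(a)^\top=\sigma_B(b)(a)$, is fixed independently of $G$. Since $(b\bullet x\bullet b')(a)=b(a)\,a\,b'(a)$, this equals $a_{wv'}E_{vw'}$ if $b(a)=E_{vw}$ and $b'(a)=E_{v'w'}$, and it is $0$ if $b(a)=0$ or $b'(a)=0$. Hence the set
\[ S:=\{b\in B \mid \sigma_B(b)=b \text{ and } (b\bullet x\bullet b')(a)\neq 0 \text{ for some } b'\in B\} \]
is computed from the data alone. It is exactly the set of $b$ with $b(a)=E_{vv}$ for a non-isolated vertex $v$:
\begin{itemize}
\item a nonzero $\sigma_B$-fixed value is a symmetric matrix unit, hence diagonal;
\item conversely, $b(a)=E_{vv}$ forces $\sigma_B(b)=b$ by distinctness.
\end{itemize}
For $s,t\in S$, the datum $(s\bullet x\bullet t)(a)\neq 0$ is adjacency of the corresponding vertices. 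So the data determine $G$ with its isolated vertices removed, and $n-|S|$ counts those. If $G$ has an isolated vertex $u$, the $b$ with $b(a)=E_{uu}$ has all-zero data, just like the $b$ with $b(a)=0$. That is why it is cleaner to recover $G$ this way than to insist on recovering $V_a$ itself.
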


\begin{proof}
Since $\FF\langle\langle a \rangle\rangle = M_n(\FF)$ and $B$ a universal $\circ$-idempotent basis, $\{b(a) : b \in B\}$ contains all $\{0, 1\}$-matrices with exactly one $1$. Therefore, $|V_a| = |V|$. The result follows from the construction of $G_a$ and \cref{prop:Tartarology}.
\end{proof}

Now, we can extract some double polynomials from the construction of the graph $G_a$. These double polynomials give finitely many natural spectra which can determine the structure of the graph under the assumption that $\FF\langle\langle a \rangle\rangle = M_n(\FF)$.

\begin{proposition} \label{prop:undiscriminating}
There exists some finite subset $D \subseteq \FF\langle\langle x \rangle\rangle$ such that the following statement holds. For every graph $G$ with $\FF\langle\langle A_G \rangle\rangle \cong M_n(\FF)$, where $A_G$ is the adjacency matrix of $G$ (up to some vertex ordering), $d(A_G)$ is a $\circ$-idempotent for every $d \in D$ and $(\SSpec G)|_D$ determines $G$ up to graph isomorphism.
\end{proposition}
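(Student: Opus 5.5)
I would take $D$ to be a set of double polynomials built from the universal involution-closed $\circ$-idempotent basis $B$ of \cref{thm:misanthropic}. This is the basis already fixed for the family $\FF\langle\langle a \rangle\rangle$, with $a$ ranging over the finite set $A$ of symmetric $\{0,1\}$-matrices with zero diagonal. Concretely, I would let $C = \{b \bullet x \bullet b' \mid b, b' \in B\}$ as in \cref{cor:dugong}. The plan is to put into $D$, for each $c \in C$, a double polynomial $d_c$ with three properties: $d_c(a)$ is always a $\circ$-idempotent, $d_c(a)=0$ exactly when $c(a)=0$, and this vanishing can be read off from the spectrum of $d_c(a)$. \Cref{cor:dugong} then finishes the proof, since knowing which $c(a)$ vanish determines $G$ whenever $\FF\langle\langle a \rangle\rangle = M_n(\FF)$. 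The isomorphism $\FF\langle\langle A_G \rangle\rangle \cong M_n(\FF)$ in the statement I read as equality inside $M_n(\FF)$, which holds by comparing dimensions.

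\emph{Making each $c(a)$ into a $\circ$-idempotent.} Since $c(a) \in \FF\langle\langle a \rangle\rangle$ and $B$ is universal, $c(a)$ is a linear combination of the $b(a)$ for $b \in B$. I would use the projection polynomials. Let $\Lambda := \bigcup_{a \in A}\bigcup_{c\in C} \Spec^\circ_\FF c(a)$, which is a finite set, and put
\[ e_c := 1^\circ - {\rm proj}^\circ_{\Lambda\cup\{0\},0} \ast c. \]
By \cref{lem:tetrylene} and \cref{ex:slabbery}, $e_c(a)$ is the $\{0,1\}$-matrix recording the support of $c(a)$. So $e_c(a)$ is a $\circ$-idempotent, and it is zero if and only if $c(a)=0$.

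\emph{Detecting vanishing from the spectrum.} A nonzero $\{0,1\}$-matrix can still be nilpotent, and indeed $e_c(a)$ has support in a single off-diagonal position when $b \neq b'$, so its spectrum is all zeros. To fix this I would symmetrize. Take
\[ d_c := e_c \bullet \sigma_x(e_c), \]
so that $d_c(a) = e_c(a)\, e_c(a)^\top$, using $\sigma(a)=a$ and the compatibility of $\sigma_x$ with transpose. Its trace is the number of ones in $e_c(a)$, which is zero iff $e_c(a)=0$. The difficulty is that this trace is an integer and may vanish in positive characteristic, and that $d_c(a)$ is no longer a $\circ$-idempotent. I would therefore apply a support projection once more and use $e_c \bullet \sigma_x(e_c)$ followed by $\circ$-multiplication with $1^\bullet$ and then $1^\circ - {\rm proj}^\circ_{\Lambda',0}$, for a suitable finite set $\Lambda'$ of values. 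This yields a diagonal $\{0,1\}$-matrix, which is a $\circ$-idempotent, and it is nonzero exactly when some row of $e_c(a)$ is nonzero. For a diagonal $\{0,1\}$-matrix, the spectrum contains $1$ iff the matrix is nonzero, so its spectrum detects vanishing in any characteristic. All the intermediate sets of values are finite because $A$ is finite, so $\Lambda'$ is well defined.

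I expect this last step, making vanishing spectrally visible while keeping each $d(A_G)$ a $\circ$-idempotent, to be the main obstacle. The earlier results handle the rest routinely: \cref{thm:misanthropic} supplies $B$, \cref{lem:tetrylene} handles the projections, and \cref{cor:dugong} together with \cref{prop:Tartarology} handles the reconstruction. Independence from the vertex ordering holds because permuting vertices conjugates every $d(A_G)$ by a permutation matrix, which preserves spectra.
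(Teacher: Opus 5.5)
Your proposal is correct and follows the paper's outline: take the set $C$ of \cref{cor:dugong}, use $\sigma_x$ to make the nonvanishing of the single-entry (hence possibly nilpotent) matrices $c(A_G)$ spectrally visible, and finish with \cref{cor:dugong}. The difference is how $D$ is built. The paper symmetrizes additively, taking $D=\{c+\sigma_x(c)\mid c\in C\}$ and using that $E_{il}+E_{li}$ has eigenvalues $\pm1$. You symmetrize multiplicatively, so each $d_c(A_G)$ is a diagonal $\{0,1\}$-matrix.

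Your version is more uniform. When $\FF\langle\langle a\rangle\rangle=M_n(\FF)$, the nonzero $b(a)$ are exactly the matrix units. For $b(a)=E_{ij}$ and $b'(a)=E_{kl}$ one gets $c(a)=a_{jk}E_{il}$, which lies on the diagonal when $i=l$. For example, $b(a)=E_{12}$ and $b'(a)=E_{31}$ give $a_{23}E_{11}$. So your remark that the support is off-diagonal whenever $b\neq b'$ is not right, though nothing in your argument depends on it. For such $c$, the paper's $(c+\sigma_x(c))(a)=2a_{jk}E_{ii}$ fails to be a $\circ$-idempotent in characteristic $\neq 2$ and vanishes in characteristic $2$. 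Your $c(a)\,c(a)^{\top}$ is $0$ or $E_{ii}$ in every case. The paper's choice is shorter, and it suffices for the pairs $b,b'\in V_a$ that actually encode arcs.

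One claim needs tightening. You say the final diagonal matrix is nonzero exactly when some row of $e_c(a)$ is nonzero. In characteristic $p$, a row containing $p$ ones yields a diagonal entry $0$, so this is false in general. Your argument survives because of the formula above: under the hypothesis, $c(a)$ has at most one nonzero entry, equal to $1$, so every row count is $0$ or $1$. You should state this explicitly.

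The same observation shows that your two support projections are unnecessary for the proposition as stated. Already $(c\bullet\sigma_x(c))(a)=a_{jk}E_{ii}$ is either $0$ or a diagonal matrix unit. That is a $\circ$-idempotent whose spectrum contains $1$ in every characteristic. The projections only add $\circ$-idempotence at graphs outside the hypothesis, which is never used.
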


\begin{proof}
Let $G$ be a graph with $\FF\langle\langle A_G \rangle\rangle \cong M_n(\FF)$. For every $b, b' \in B$, the matrix $(b \bullet x \bullet b')(A_G)$ is a $\{0, 1\}$-matrix with zero diagonal and with at most one entry being $1$. Therefore, the spectrum of $(b \bullet x \bullet b')(A_G)$ determines whether $(b \bullet x \bullet b')(A_G)$ is zero or not. More precisely, $(b \bullet x \bullet b')(A_G)$ is nonzero if and only if it has a nonzero eigenvalue.

Let $D := \{c + \sigma_x(c) \in \FF\langle\langle x \rangle\rangle \mid c \in C\}$, where $C$ is the finite set defined in \cref{cor:dugong}, and $\sigma_x$ is the standard involution. For every $c \in C$, the matrix $(c + \sigma_x(c))(A_G)$ is a symmetric $\{0, 1\}$ matrix with zero diagonal and with either zero or two entries being $1$. The spectrum of the former one consists of $n$ many of $0$'s, and that of the latter one consists of one $-1$, one $1$, and $n - 2$ many of $0$'s. Therefore, the spectrum of $(c + \sigma_x(c))(A_G)$ determines whether $(c + \sigma_x(c))(A_G)$ is zero or not.

Now, the result then follows from \cref{cor:dugong}.
\end{proof}

\subsection{Merging spectra}

\cref{prop:undiscriminating} shows that only a finite number of natural spectra is needed to determine the structure of a graph under certain assumptions. To obtain a single natural spectrum, we now show how to encode multiple natural graph spectra into one.

For a matrix $A \in M_n(\FF)$, let $\trSpec A$ be the tuple of traces of powers of $A$:
\[
\trSpec A := (\tr A, \tr A^2, \dots, \tr A^n). 
\]
Then, $\trSpec A$ and $\Spec_{\overline{\FF}} A$ determine each other.

\begin{proposition} \label{prop:indictional}
Let $m, b$ be natural numbers. There exist positive integers $a_1, \dots, a_m$ (which depend on $m$ and $b$) such that the following statement holds. Let $A_1, \dots, A_m$ be a finite family of $n \times n$ matrices over $\ZZ$ with $\|A_i\|_\infty \leq b$, where $\|A_i\|_\infty$ is the largest absolute value of the entries of $A_i$. Then, $\Spec_{\overline{\FF}} \left(a_1 A_1 + \dots + a_m A_m \right)$ determines $\Spec_{\overline{\FF}} A_i$ for all $1 \leq i \leq m$.
\end{proposition}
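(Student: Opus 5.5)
The plan is to replace spectra by characteristic polynomials and to recover the individual characteristic polynomials by a base-$N$ digit-extraction argument. Throughout I treat $n$ as the number of vertices fixed at the beginning of \cref{sec:Spinozism}. This is the one point where I am uncertain: I do not see how to make the $a_i$ independent of $n$ when the $A_i$ are arbitrary, since for non-diagonal matrices the eigenvalues can reach size $nb$. So the constants below will depend on $m$, $b$ and the ambient $n$.

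\textbf{Step 1: pass to the generic characteristic polynomial.} Introduce indeterminates $t_1,\dots,t_m$ and set
\[ P(\lambda; t_1,\dots,t_m) := \det\bigl(\lambda I - t_1A_1 - \dots - t_mA_m\bigr) \in \ZZ[\lambda, t_1,\dots,t_m]. \]
This polynomial is homogeneous of total degree $n$. For each $i$, $P(\lambda; e_i)$ is the characteristic polynomial of $A_i$, and it determines $\Spec_{\overline{\FF}} A_i$. By expanding the determinant, every coefficient of $P$ is an integer bounded in absolute value by $n!\,b^n$ times a multinomial factor. This gives a bound $K = K(n,b,m)$ on all coefficients.

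\textbf{Step 2: specialize with widely separated weights.} Put $N := 2K+1$ and $a_i := N^{(n+1)^{i-1}}$. The characteristic polynomial of $\sum_i a_iA_i$ is $P(\lambda; a_1,\dots,a_m)$. Fix a power of $\lambda$ and look at its coefficient. The monomial $t^\alpha$, with $|\alpha|\le n$, contributes $c_\alpha N^{e(\alpha)}$, where $e(\alpha)=\sum_i \alpha_i (n+1)^{i-1}$. Since each $\alpha_i\le n$, the map $\alpha\mapsto e(\alpha)$ is injective. Because $|c_\alpha|\le K<N/2$, the balanced base-$N$ expansion of this integer coefficient recovers every $c_\alpha$ uniquely. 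In particular it recovers the coefficients of the pure powers $t_i^{k}$, which are exactly the coefficients of $\det(\lambda I - A_i)$.

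\textbf{Step 3: characteristic issues.} In characteristic zero, $\Spec_{\overline{\FF}}$ of an integer matrix is the same as its integer characteristic polynomial, so Step 2 finishes the proof. In characteristic $p$, the spectrum of the sum only gives its characteristic polynomial modulo $p$, and the digit extraction of Step 2 breaks down.

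This is the step I expect to be the main obstacle. My first attempt would be to choose $a_i$ to be distinct powers of a single integer $N$, with $N$ a multiple of $p$ adjusted to be at least $K$, and then work $N$-adically. However, reduction modulo $p$ kills every term except $\alpha=0$, so this approach is likely to fail.

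The fallback is to restrict to the situation in which the paper applies the result, namely the matrices from \cref{prop:undiscriminating}. There each $A_i$ is a symmetric $\{0,1\}$-matrix that is either zero or has a spectrum consisting only of $0$ and $\pm1$. In that case only a bounded amount of information per $A_i$ is needed, namely whether $A_i$ is zero. I would then check directly that, for commuting-free combinations with $a_i$ chosen to be distinct nonzero field elements when $p$ is large, the rank or the nonzero part of the spectrum of the sum detects which $A_i$ are nonzero.
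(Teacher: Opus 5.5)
Your Steps 1--2 are a correct proof in characteristic $0$, but by a different route from the paper's. The paper does $m=2$ and inducts. For $z$ large, $\tr A_1^k$ is the residue of $\tr(A_1+zA_2)^k$ modulo $z$, and $\tr A_2^k$ is the nearest integer to $z^{-k}\tr(A_1+zA_2)^k$; power traces are then converted to spectra.

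You instead specialize the multivariate characteristic polynomial $\det(\lambda I-\sum_i t_iA_i)$ at $a_i=N^{(n+1)^{i-1}}$, so that each monomial $t^\alpha$ sits in its own balanced base-$N$ digit. This has three advantages. It treats all $m$ at once, with no induction and no growing entry bounds. It needs no Newton identities. And it recovers strictly more: the whole multivariate characteristic polynomial, hence the spectrum of every integer combination of the $A_i$. The paper's version is quicker to write down.

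Your worry about $n$ applies equally to the paper. Its $z$ must exceed, e.g., $\tr(bJ)^k=n^kb^k$, so ``depend on $m$ and $b$'' tacitly includes the fixed $n$. This dependence is unavoidable. For given $a_1,a_2$ and $n\ge\max(a_1,a_2)$, take the pairs $(A_1,A_2)=(J_{a_2}\oplus 0,\,0)$ and $(0,\,J_{a_1}\oplus 0)$. Both combinations have eigenvalue $a_1a_2$ once and $0$ with multiplicity $n-1$, while the two choices of $A_1$ are not cospectral.

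Your Step 3 obstacle is genuine, but no fallback is needed or possible: the proposition is false in small positive characteristic $p$. Take $A_i=c_iI$ with $c_i\in\{0,1\}$. Then $\sum_ia_iA_i$ has the single eigenvalue $\sum_ia_ic_i \bmod p$. So one would need $c\mapsto\sum_ia_ic_i \bmod p$ to be injective on $\{0,1\}^m$, which is impossible once $2^m>p$.

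The paper's proof is also tacitly characteristic $0$. It reduces and rounds integer traces, and the equivalence between $(\tr A^k)_{k\le n}$ and the spectrum fails for $p\le n$ (e.g.\ $I_p\oplus 0$ and $0$ have equal power traces).

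Conversely, your argument goes through verbatim when $p$ exceeds twice the bound on the coefficients of $\det(\lambda I-\sum_ia_iA_i)$. In that case balanced residues mod $p$ lift uniquely to $\ZZ$.

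So state the hypothesis explicitly: characteristic $0$, or $p$ large in terms of $n,m,b$. Delete the fallback. It changes the statement, and as sketched it is not an argument.
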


\begin{proof}
It suffices to prove the $m = 2$, since the result for general $m$ follows from an induction using $m = 2$ case.

Now, assume that $m = 2$. Let $z$ be a sufficiently large positive integer. For every $1 \leq k \leq n$, $\tr A_1^k$ is the remainder of $\tr (A_1 + z A_2)^k$ modulo $z$, and $\tr A_2^k$ is the closest integer to $\frac{1}{z^k} \tr (A_1 + z A_2)^k$. Thus, the $\trSpec (A_1 + z A_2)$ determines $\trSpec A_1$ and $\trSpec A_2$. Due to the equivalence between $\trSpec$ and $\Spec$, the spectrum of $A_1 + z A_2$ determines those of $A_1$ and $A_2$.
\end{proof}

With the technical result \cref{prop:indictional}, we can combine natural spectra indexed by a set into a single natural spectrum.

\begin{proposition} \label{prop:Rhus}
Let $D \subseteq \FF\langle\langle x \rangle\rangle$ be a finite subset. There exists a $p \in \FF\langle\langle x \rangle\rangle$ such that the following statement holds. For every graph $G$ such that $d(a)$ is a $\circ$-idempotent for all $d \in D$, $(\SSpec G)(p)$ determines $(\SSpec G)|_D$.
\end{proposition}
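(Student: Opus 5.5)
The plan is to apply \cref{prop:indictional} directly, using the $\circ$-idempotent hypothesis to supply the needed integrality and bound. Enumerate $D = \{d_1, \dots, d_m\}$. For any graph $G$ as in the statement, each $d_i(a)$ (with $a = A_G$) is a $\circ$-idempotent in $M_n(\FF)$. Its entries therefore satisfy $t^2 = t$, so it is a $\{0,1\}$-matrix. Viewing these as integer matrices, we have $\|d_i(a)\|_\infty \le 1$. So we apply \cref{prop:indictional} with this $m$ and bound $b = 1$ to obtain positive integers $a_1, \dots, a_m$, depending only on $D$ and not on $G$. We then set
\[
  p := a_1 d_1 + \dots + a_m d_m \in \FF\langle\langle x \rangle\rangle ,
\]
which gives $p(a) = \sum_i a_i d_i(a)$ because evaluation is linear. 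Then $(\SSpec G)(p) = \Spec_{\overline{\FF}} p(a)$, and \cref{prop:indictional} says this determines each $\Spec_{\overline{\FF}} d_i(a) = (\SSpec G)(d_i)$, i.e.\ $(\SSpec G)|_D$.

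The key steps are, in order:
\begin{enumerate}
\item observe that the $\circ$-idempotent hypothesis forces $\{0,1\}$ entries;
\item fix the integers $a_i$ uniformly, since they depend only on $m$ and $b=1$ (and on the fixed $n$);
\item check that the integer linear combination is a legitimate double polynomial;
\item invoke \cref{prop:indictional}.
\end{enumerate}

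The main obstacle is characteristic. \cref{prop:indictional} is phrased for integer matrices, with the recovery done through integer traces ($\tr A_1^k$ as a remainder mod $z$, $\tr A_2^k$ via rounding). Over a field of characteristic $\ell > 0$, the spectrum of $p(a)$ over $\overline{\FF}$ only sees the reduction mod $\ell$, and the integers $a_i$ may themselves vanish in $\FF$. To handle this, I would first treat characteristic $0$, where $\ZZ \subseteq \FF$ and the trace/spectrum equivalence is exact. There the statement is immediate: the $\{0,1\}$-matrices $d_i(a)$ are genuine integer matrices, and $\Spec_{\overline{\FF}}$ of an integer matrix is the same as its complex spectrum up to the embedding.

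For positive characteristic I would try to replace the integer weights by a different encoding that works inside $\FF$. One option is the same nesting trick with $z$ replaced by distinct nonzero scalars in a large enough field extension. Another is a block-style separation using $\circ$-orthogonality: choose $p$ so that the supports can be separated by further $\circ$-projections of $p(a)$. If that fails, I would restrict to characteristic zero, which suffices for \cref{thm:syndicalism} as stated over $G(n,\frac12)$. I would also note in passing that for idempotent $\{0,1\}$ inputs, only finitely many possible $m$-tuples of matrices arise for fixed $n$. So the spectrum map on these finitely many configurations just has to be injective on spectra of the $d_i$, and choosing the $a_i$ sufficiently generic achieves this whenever $\FF$ is large enough.
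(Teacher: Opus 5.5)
Your argument is the paper's own: $\circ$-idempotents in $M_n(\FF)$ are $\{0,1\}$-matrices, so \cref{prop:indictional} with $m=|D|$ and $b=1$ supplies integers $a_i$, and $p=\sum_i a_i d_i$ works. Your characteristic caveat is well founded and is not addressed in the paper, whose proof of \cref{prop:indictional} (integer traces and the $\trSpec$--$\Spec$ equivalence) needs characteristic $0$; your generic-coefficient fix is sound over any infinite field, because setting $a=e_i$ in $\det(tI-\sum_i a_iA_i)$ recovers $\det(tI-A_i)$, so configurations with different spectral tuples give different polynomials in $(t,a)$, whereas over a finite field that is small relative to $n$ the statement can genuinely fail for suitable $D$, since $\Spec_{\overline{\FF}}\,p(A_G)$ is determined by a monic degree-$n$ polynomial over $\FF$ and hence takes at most $|\FF|^n$ values.
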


\begin{proof}
Note that the $\circ$-idempotents in $M_n(\FF)$ are just $\{0, 1\}$-matrices. The result follows from \cref{prop:indictional} for $m := |D|$ and $b = 1$.
\end{proof}

Now, we can prove the first main result.

\begin{proof}[Proof of \cref{thm:albitic}]
By \cref{prop:undiscriminating}, there exists some finite subset $D \subseteq \FF\langle\langle x \rangle\rangle$ such that for every $G$ with $\FF\langle\langle A_G \rangle\rangle \cong M_n(\FF)$,
\begin{enumerate}
\item\label{itm:midfacial} $d(A_G)$ is a $\circ$-idempotent for every $d \in D$,
\item\label{itm:speldring} $(\SSpec G)|_D$ determines $G$ up to graph isomorphism.
\end{enumerate}
The \ref{itm:midfacial} makes sure that \cref{prop:Rhus} is applicable to $D$. Thus there exists a $p \in \FF\langle\langle x \rangle\rangle$ such that $(\SSpec G)(p)$ determines $(\SSpec G)|_D$. Consider the natural spectrum $\Spec G := (\SSpec G)(p) = \Spec_{\overline{\FF}} p(A_G)$. Combined with \ref{itm:speldring}, we know that $\Spec G$ determines $G$ up to isomorphism for all $G$ with $\FF\langle\langle A_G \rangle\rangle \cong M_n(\FF)$.
\end{proof}

\section{Double algebra of random graphs} \label{sec:nonair}

Having established in \cref{thm:albitic} that the condition $\FF\langle\langle A_G \rangle\rangle = M_n(\FF)$ is sufficient in \cref{sec:Spinozism}, we now prove \cref{thm:mammonolatry}, which says the sufficient condition is satisfied by random graphs. \cref{thm:albitic,thm:mammonolatry} together complete the proof of our main theorem \cref{thm:syndicalism}.

Let $n$ be a fixed natural number, and let $G$ be an Erd\H{o}s-R\'enyi random graph $G(n, \frac{1}{2})$. Let $r := \lfloor 3 \log_2 n \rfloor$. Let $v_1, \dots, v_n$ be a vertex ordering such that $d_1 \geq \dots \geq d_n$, where $d_i$ is the degree of vertex $v_i$. Consider the adjacency matrix $A$ of $G$ with respect to this vertex order. Let $w_j := (A_{i, j} \mid 1 \leq i \leq r) \in \{0, 1\}^r$.

\begin{theorem}[{\cite[Theorem 1.2]{BabaiErdoesSelkow1980}}] \label{thm:divulsive}
We have $d_1 > \dots > d_r > d_{r + 1}$, and $w_j \neq w_k$ for distinct $j, k \in [r + 1, n]$ asymptotically almost surely. 
\end{theorem}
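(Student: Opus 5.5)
The statement is \cite[Theorem 1.2]{BabaiErdoesSelkow1980}. I would reprove it in two parts: a degree-gap estimate, and then a decoupling argument. The decoupling needs the gap estimate in a slightly stronger form. Throughout, $r = \lfloor 3\log_2 n\rfloor$.

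\emph{Step 1 (degrees).} Each degree is distributed as $\mathrm{Bin}(n-1,\frac12)$, and two degrees are dependent only through the single shared edge. I would choose a threshold $t = \frac{n}{2} + x\frac{\sqrt n}{2}$ with $x \asymp \sqrt{2\ln(n/r)}$, tuned so that the expected number of vertices of degree at least $t$ is about $Cr$.
\begin{itemize}
\item A second-moment (Chebyshev) computation, using the near-independence of degrees, shows that a.a.s.\ at least $r+1$ vertices have degree at least $t$.
\item For a fixed pair $u \neq v$, condition on the edge $uv$ and on the edges at $u$. The local limit theorem for the binomial gives
\[
\Pr\bigl(d_u \ge t,\ |d_u-d_v| \le 2\bigr) \le \Pr(d_u\ge t)\cdot O\!\left(\frac{x}{\sqrt n}\right)\Pr(d_v \ge t-2).
\]
This bounds $\Pr(d_v = k)$ for $k$ near $t$ by roughly $\Pr(d_v \ge t)\cdot x/\sqrt n$.
\item Summing over pairs, the expected number of pairs above the threshold with degrees within $2$ of each other is $O(r^2 x/\sqrt n) = O(\log^{5/2} n/\sqrt n) \to 0$.
\end{itemize}
Hence a.a.s.\ the top $r+1$ degrees satisfy $d_1 > \dots > d_{r+1}$, with consecutive gaps at least $3$. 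The same estimate holds for the graph obtained by deleting any two vertices, since it is just $G(n-2,\frac12)$. I expect this step to be the main technical obstacle: the local-limit and tail estimates must be uniform in the window near $t$, and the dependence from the shared edge $uv$ has to be absorbed.

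\emph{Step 2 (distinct columns).} The difficulty is that the set $S=\{v_1,\dots,v_r\}$ is random and depends on the edges to $j$ and $k$. Fix distinct vertices $j,k$. Let $G' = G - \{j,k\}$, and let $S'$ be the set of $r$ highest-degree vertices of $G'$. Apply Step 1 (with a union bound over all pairs $j,k$ and a sufficiently small failure probability) so that $G'$ has gaps at least $3$ among its top $r+1$ degrees. Restoring $j$ and $k$ changes each degree by at most $2$, hence each pairwise difference by at most $2$, so the order among the top $r+1$ vertices is preserved. Consequently, if $j,k\notin S$, then $S = S'$.

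Now $S'$ is a function of the edges of $G'$ alone, so it is independent of the edges between $\{j,k\}$ and $S'$. Therefore
\[
\Pr\bigl(w_j = w_k,\ j,k\notin S\bigr) \le \Pr\bigl(\text{bad gaps in } G'\bigr) + 2^{-r} \le o(n^{-2}) + 2n^{-3}.
\]
A union bound over the at most $n^2$ pairs $\{j,k\}$ gives total failure probability $o(1)$. This requires the gap failure probability in Step 1 to be $o(n^{-2})$ per pair, or else handling it by a single event covering all $G-\{j,k\}$ simultaneously. If the $o(n^{-2})$ bound fails, I would fall back on that single event: a.a.s.\ $G$ itself has gaps at least $5$ among its top $r+3$ degrees. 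This implies the required gap property in every $G-\{j,k\}$ at once, at the cost of re-running Step 1 with window $4$ instead of $2$, which leaves the $O(r^2x/\sqrt n)$ bound essentially unchanged.
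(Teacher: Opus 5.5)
The paper does not prove this statement. It is quoted from Babai--Erd\H{o}s--Selkow and used as a black box in the proof of \cref{thm:mammonolatry}. Your proposal is therefore a self-contained replacement for the citation, not a variant of an argument in the paper.

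\textbf{Correctness.} The replacement is sound, with one correction: the per-pair route in Step~2 must be discarded, not kept as a first option. A bound of $o(n^{-2})$ on the probability of a near-tie among the top $r+1$ degrees of $G(n-2,\frac12)$ is false. Already $\Pr(d_1=d_2)$ is of order $n^{-1/2}$ up to logarithmic factors, so your own Step~1 estimate is essentially sharp. Only the single global event can work.

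\textbf{A simpler global event.} The event can be weakened to
\[
E=\{d_1>\dots>d_r,\ d_r-d_{r+1}\ge 3\}
\]
in $G$ itself, which Step~1 already gives a.a.s. Suppose $j,k\notin S$. Deleting $j$ and $k$ lowers every degree in $S$ by at most $2$ and raises no degree. Hence $S$ is the strict top-$r$ set of $G-\{j,k\}$. You never need to restore $j,k$, and you do not need gaps $\ge 5$ among the top $r+3$ degrees.

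\textbf{Applying independence.} Define $S'$ from $G-\{j,k\}$ using a fixed tie-breaking rule. Let $F_{jk}$ be the event that $j$ and $k$ have the same neighbours in $S'$; then $\Pr(F_{jk})=2^{-r}$ exactly. Use the containment
\[
E\cap\{w_j=w_k,\ j,k\notin S\}\subseteq F_{jk}.
\]
Do not condition on $E$, since $E$ depends on the edges at $j$ and $k$. The total failure probability is then at most
\[
\Pr(E^c)+\binom n2 2^{-r}\le o(1)+1/n .
\]

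\textbf{What each route gives.} Your Chebyshev step only yields failure probability $O(1/\log n)$, weaker than the quantitative bound in the cited paper. That suffices here, because the paper only uses the a.a.s.\ statement.
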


The operations of extracting high degree vertices and identifying $w_j$ can be implemented in $\FF\langle\langle A \rangle\rangle$. This idea leads to a proof of \cref{thm:mammonolatry}.

\begin{proof}[Proof of \cref{thm:mammonolatry}]
By \cref{thm:divulsive}, it suffices to prove that $d_1 > \dots > d_r > d_{r + 1}$ and $w_j \neq w_k$ for distinct $j, k \in [r + 1, n]$ implies that $\FF\langle\langle A_G \rangle\rangle = M_n(\FF)$.

For each $1 \leq i \leq r$, define $b_i \in \FF\langle\langle A \rangle\rangle$ by 
\[
b_i := {\rm proj}^\circ_{\{0, 1, \dots, n - 1\}, d_i} \left((A \bullet A) \circ I\right).
\]
Then, $b_i$ is a diagonal $\{0, 1\}$-matrix. Moreover, $(b_i)_{k, k} = 1$ if and only if $d_k = d_i$. Since $d_1 > \dots > d_r > d_{r + 1}$, $b_i$ has a unique entry being $1$, the $(i, i)$-th entry.

Let $J$ be the $n \times n$ all-one matrix. For each $r + 1 \leq j \leq n$, define $b_j \in \FF\langle\langle A \rangle\rangle$ by 
\[
b_j := \left(\sum_{i = 1}^r \left( A_{i, j} A + (1 - A_{i, j}) (J - I - A)\right) \bullet b_i \bullet J\right) \circ \left(I - \sum_{i = 1}^r b_i\right).
\]
Then, $b_j$ is a diagonal $\{0, 1\}$-matrix. Moreover, $(b_j)_{k, k} = 1$ if and only if $r + 1 \leq k \leq n$ and $w_k = w_j$. Since $w_j \neq w_k$ for distinct $j, k \in [r + 1, n]$, $b_j$ has a unique entry being $1$, specifically at the $(j, j)$-position.

Therefore, $b_1, \dots, b_n$ runs over all diagonal $\{0, 1\}$-matrices with exactly one entry being $1$. The products $b_s \bullet J \bullet b_t \in \FF\langle\langle A \rangle\rangle$ is the $\{0, 1\}$-matrices with $(s, t)$-entry being $1$ and all other entries $0$. Thus,
\[ M_n(\FF) \subseteq \Span_{\FF}\{b_s \bullet J \bullet b_t \mid 1 \leq s, t \leq n\} \subseteq \FF\langle\langle A \rangle\rangle \subseteq M_n(\FF). \qedhere \]
\end{proof}

We note, though, that the results in \cite{BabaiErdoesSelkow1980}, such as \cref{thm:divulsive}, cannot be used directly to prove \cref{thm:syndicalism}, since they are not natural in the sense of \cref{sec:Spinozism}. In particular, \cref{thm:divulsive} needs to order the vertices in $G$ by degrees, and this operation depends on the graph $G$. It is \cref{thm:albitic}, more precisely, the universal $\circ$-idempotent basis, that ``converts'' the idea of \cite{BabaiErdoesSelkow1980} to a natural spectrum.

\section{Dimensions of double algebras of graphs} \label{sec:cannabinaceous}

As shown in \cref{thm:albitic}, the double algebra $\FF\langle\langle A_G \rangle\rangle$ of a graph $G$ can be used in a key sufficient condition. It is very interesting to give a more precise description of $\FF\langle\langle A_G \rangle\rangle$ for a general graph $G$, and to investigate what would happen if $\FF\langle\langle A_G \rangle\rangle \neq M_n(\FF)$.

Let $G$ be a connected graph with $n$ vertices, and let $A_G$ be its adjacency matrix and ${\rm diam}(G)$ its diameter. Since the distance matrix is in the double algebra $\mathbb{F}\langle\langle A_G \rangle\rangle$ by \cref{ex:Vesiculatae}, we have natural bounds
\[ {\rm diam}(G) + 1 \leq \dim \mathbb{F}\langle\langle A_G \rangle\rangle \leq n^2. \]

If the upper bound $n^2$ is achieved, then \cref{thm:albitic} shows that some natural spectrum, which is independent of the choice of $G$, determines the structure of $G$.

The lower bound ${\rm diam}(G) + 1$ is achieved if and only if $\mathbb{F}\langle\langle A_G \rangle\rangle$ is the Bose-Mesner algebra of a P-polynomial association scheme. In other words, the lower bound is achieved if and only if $G$ is a distance-regular graph.

For a connected strongly regular graph $G$, the double algebra admits a basis $I, A_G, J - I - A_G$. Therefore, the strong natural spectrum of $G$ consists of all information of linear combinations of $I$, $A_G$ and $J - I - A_G$. In other words, knowing the strong spectrum of a strongly regular graph is equivalent to knowing the type of the strongly regular graph.

Similar arguments work for connected distance-regular graphs. Knowing the strong natural spectrum of a distance regular graph is equivalent to knowing the intersection array of the graph. Therefore, for connected distance-regular graphs, any natural spectrum contains no more information than the intersection array, hence cannot be used to distinguish different distance-regular graphs with the same intersection array. This limitation can be used to prove rigorously that certain matrices in \cite[Section 2.5]{DamHaemers2003} are not natural.

\printbibliography

\end{document}